\theoremstyle{plain}
\newtheorem{satz}{Theorem}[section]
\newtheorem{lem}[satz]{Lemma}
\newtheorem{kor}[satz]{Corollary}
\newtheorem{prop}[satz]{Proposition}
\theoremstyle{definition}
\newtheorem{defn}[satz]{Definition}
\newtheorem{bem}[satz]{Remark}
\newcommand{\R}{\mathbb{R}}
\newcommand{\C}{\mathbb{C}}
\newcommand{\N}{\mathbb{N}}
\renewcommand{\O}{\mathbb{O}}
\newcommand{\Ric}{\operatorname{Ric}}
\newcommand{\scal}{\operatorname{scal}}
\newcommand{\vol}{\operatorname{vol}}
\newcommand{\Sym}{\operatorname{Sym}}
\newcommand{\tr}{\operatorname{tr}}
\newcommand{\im}{\operatorname{im}}
\newcommand{\diag}{\operatorname{diag}}
\newcommand{\sym}{\operatorname{sym}}
\newcommand{\End}{\operatorname{End}}
\newcommand{\Aut}{\operatorname{Aut}}
\newcommand{\Hom}{\operatorname{Hom}}
\newcommand{\Ad}{\operatorname{Ad}}
\newcommand{\ad}{\operatorname{ad}}
\newcommand{\Cas}{\operatorname{Cas}}
\newcommand{\Sy}{\mathscr{S}}
\newcommand{\Sl}{\mathfrak{S}}
\newcommand{\X}{\mathfrak{X}}
\newcommand{\U}{\operatorname{U}}
\newcommand{\SU}{\operatorname{SU}}
\newcommand{\SO}{\operatorname{SO}}
\newcommand{\so}{\mathfrak{so}}
\newcommand{\Sp}{\operatorname{Sp}}
\newcommand{\Spin}{\operatorname{Spin}}
\newcommand{\pr}{\operatorname{pr}}
\renewcommand{\i}{\mathrm{i}}
\renewcommand{\H}{\mathfrak{H}}
\newcommand{\Einstein}{E}
\newcommand{\TT}{\Sy^2_{\mathrm{tt}}}
\renewcommand{\t}{\mathfrak{t}}
\title{\rmfamily Stability of Einstein metrics on symmetric spaces of compact type}
\author{Paul Schwahn*}
\date{\today}
\begin{document}

\maketitle
{\let\thefootnote\relax\footnotetext{*Institut für Geometrie und Topologie, Fachbereich Mathematik, Universität Stuttgart, Pfaffenwaldring 57, 70569 Stuttgart, Germany.}}

\begin{abstract}
\footnotesize
\begin{center}
\textbf{Abstract}
\end{center}

\noindent
We prove the linear stability with respect to the Einstein-Hilbert action of the symmetric spaces $\SU(n)$, $n\geq 3$, and $E_6/F_4$. Combined with earlier results, this resolves the stability problem for irreducible symmetric spaces of compact type.

\textit{MSC (2020):} 53C24, 53C25, 53C30, 53C35.

\textit{Keywords:} Symmetric spaces, Einstein metrics, Stability, Lichnerowicz Laplacian.
\end{abstract}

\subsection*{Declarations}

\paragraph{Funding} No funds, grants, or other support was received.

\paragraph{Employment} During the time of research as well as currently, the author is employed as a research assistant at the University of Stuttgart.

\paragraph{Competing interests} The author has no relevant financial or non-financial interests to disclose.

\paragraph{Data availability statement} Data sharing not applicable to this article as no datasets were generated or analysed during the current study.

\paragraph{Code availability} Not applicable.

\pagebreak

\section{Introduction}

Let $M$ be a closed manifold of dimension $n>2$. It is a well-known fact (see \cite{Be}) that Einstein metrics are critical points of the total scalar curvature functional
\[g\mapsto S(g)=\int_M\scal_g\vol_g,\]
also called the Einstein-Hilbert action, restricted to the space of Riemannian metrics of a fixed volume. In general, these critical points are neither maximal nor minimal. If we, however, restrict $S$ to the set $\Sl$ of all Riemannian metrics on $M$ of the same fixed volume that have constant scalar curvature, then some Einstein metrics are maximal, while others form saddle points. To examine this, one considers the second variation $S''_g$ of $S$ at a fixed Einstein metric $g$ on $M$. If we exclude the case where $(M,g)$ is a standard sphere, the tangent space of $\Sl$ at $g$ consists precisely of tt-tensors, i.e. symmetric $2$-tensors that are \textbf{t}ransverse (divergence-free) and \textbf{t}raceless. In these directions, the coindex and nullity of $S''_g$ are always finite. The \emph{stability problem} is to decide whether they vanish for a given Einstein manifold $(M,g)$.

The stability of an Einstein metric $g$ is determined by the spectrum of a Laplace-type operator $\Delta_L$, called the Lichnerowicz Laplacian, on tt-tensors. There is a critical eigenvalue, corresponding to null directions for $S''_g$, which is equal to $2\Einstein$, where $\Einstein$ is the Einstein constant of $g$. The metric $g$ is called \emph{linearly (strictly) stable} if $\Delta_L\geq2\Einstein$ (resp. $\Delta_L>2\Einstein$) on tt-tensors, and \emph{infinitesimally deformable} if there is a tt-eigentensor of $\Delta_L$ for the critical eigenvalue.

Suppose that $(M,g)$ is a locally symmetric Einstein manifold of compact type. The Cartan–Ambrose–Hicks theorem implies that its universal cover $(\tilde{M},\tilde{g})$ is a simply connected symmetric space. As such, $(\tilde{M},\tilde{g})$ can be written as a Riemannian product of irreducible symmetric spaces of compact type. For many of these spaces, the stability problem has been decided by N. Koiso. The following theorem collects the results of Koiso in \cite{Kois} together with a result of J. Gasqui and H. Goldschmidt in \cite{GG} about the complex quadric $\SO(5)/(\SO(3)\times\SO(2))$.

\begin{satz}
\label{koisoclass}
\begin{enumerate}
	\item The only irreducible symmetric spaces of compact type that are infinitesimally deformable are
\[\SU(n),\ \SU(n)/\SO(n),\ \SU(2n)/\Sp(n)\quad(n\geq3),\]
\[\SU(p+q)/\mathrm{S}(\U(p)\times\U(q))\quad (p\geq q\geq 2),\]
as well as $E_6/F_4$.
	\item The irreducible symmetric spaces
\[\Sp(n)\quad(n\geq 2),\quad\Sp(n)/\U(n)\quad(n\geq 3),\]
as well as the complex quadric $\SO(5)/(\SO(3)\times\SO(2))$ are unstable.
	\item Let $(M,g)$ be an irreducible symmetric space of compact type. If $(M,g)$ is none of the spaces from 1. and 2., nor one of
\[\Sp(p+q)/(\Sp(p)\times\Sp(q))\quad(p\geq q\geq2\text{ or }p=2,q=1)\]
nor $F_4/\Spin(9)$, then $g$ is strictly stable.
\end{enumerate}
\end{satz}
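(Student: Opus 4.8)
The plan is to recast the stability problem as a question in the representation theory of $G$, following Koiso's method. Write the symmetric space as $M=G/K$ with Cartan decomposition $\mathfrak{g}=\mathfrak{k}\oplus\mathfrak{p}$, and normalize $g$ so that it is induced by the negative of the Killing form; then $T_oM\cong\mathfrak{p}$ as a $K$-representation and the bundle $S^2_0T^*M$ of traceless symmetric $2$-tensors is associated to the $K$-module $\Sym^2_0\mathfrak{p}$. By the Peter–Weyl theorem and Frobenius reciprocity,
\[
L^2(S^2_0T^*M)\;\cong\;\bigoplus_{\gamma}V_\gamma\otimes\Hom_K\bigl(V_\gamma,\Sym^2_0\mathfrak{p}\bigr),
\]
the sum running over the irreducible representations $V_\gamma$ of $G$. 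The structural fact I would use is that on a symmetric space the Lichnerowicz Laplacian coincides with the Casimir operator of $G$; hence $\Delta_L$ acts on the $\gamma$-isotypic summand as the scalar $\Cas(\gamma)=\langle\gamma+2\rho,\gamma\rangle$, so that determining its spectrum on tt-tensors becomes a matter of enumerating highest weights and evaluating Casimir eigenvalues.

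In this normalization the Einstein constant is $\Lambda=\tfrac12$, so the critical value is $2\Lambda=1$, which is exactly the Casimir eigenvalue $\Cas(\mathrm{ad})$ of the adjoint representation. Linear stability thus amounts to the combinatorial statement that $\Cas(\gamma)\geq\Cas(\mathrm{ad})$ for every $\gamma$ contributing a nonzero tt-tensor, and infinitesimal deformability to the existence of such a $\gamma$ with $\Cas(\gamma)=\Cas(\mathrm{ad})$. Since only finitely many dominant weights $\gamma$ satisfy $\Cas(\gamma)\leq\Cas(\mathrm{ad})$, the problem reduces to inspecting this finite list for each space: one determines, via the branching $V_\gamma\downarrow K$, whether $\Sym^2_0\mathfrak{p}$ occurs in $V_\gamma$, and whether the resulting sections are genuinely transverse (the traceless condition being automatic, as we already sit inside $S^2_0$). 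For almost all spaces every small-Casimir $\gamma$ either fails to occur or yields non-transverse sections, leaving $\Cas(\gamma)>\Cas(\mathrm{ad})$ strictly and giving statement~3.

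The delicate points are the borderline spaces. For the $\SU$-series and $E_6/F_4$ of statement~1 one must exhibit a $\gamma$ with $\Cas(\gamma)=\Cas(\mathrm{ad})$ whose associated sections are verified to be tt, producing the claimed infinitesimal deformations; for the unstable spaces of statement~2 one instead produces a $\gamma$ with $\Cas(\gamma)<\Cas(\mathrm{ad})$, i.e.\ a destabilizing direction. I expect this verification — computing the explicit branching laws and checking both the transversality and the exact Casimir value of the candidate tensors — to be the main obstacle, all the more so for the exceptional spaces $E_6/F_4$ and $F_4/\Spin(9)$, where the representation theory of exceptional groups enters and the relevant highest weights are harder to control. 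Finally, the theorem as stated is assembled by combining Koiso's computations in \cite{Kois} for the classical and most exceptional families with the separate treatment of the complex quadric $\SO(5)/(\SO(3)\times\SO(2))$ by Gasqui and Goldschmidt in \cite{GG}.
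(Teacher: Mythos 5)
Your proposal is correct and takes essentially the same route as the paper: Theorem \ref{koisoclass} is stated there without independent proof, resting on Koiso \cite{Kois} and Gasqui--Goldschmidt \cite{GG}, and the Peter--Weyl/Frobenius decomposition together with the identification of $\Delta_L$ with the Casimir operator and the critical value $2\Lambda=1$ that you outline is precisely the framework the paper recalls in Sections \ref{sec:prelim} and \ref{sec:diff} and then applies to the remaining open cases. Your closing caveat --- that the occurrence of a critical or subcritical Casimir eigenvalue in $\S^2_0(M)$ must still be supplemented by a divergence-freeness check --- is exactly the point on which both the cited works and the paper's own new results turn.
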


Moreover, the smallest eigenvalue of $\Delta_L$ on trace-free symmetric $2$-tensors has been computed in each case (see \cite{CH}). Among the spaces that possess infinitesimal deformations, we have $\Delta_L\geq 2\Einstein$ on $\Sy^2_0(M)$ on the spaces
\[\SU(n)/\SO(n),\ \SU(2n)/\Sp(n)\ (n\geq 3),\ \SU(p+q)/\text{S}(\U(p)\times\U(q))\ (p\geq q\geq 2),\]
which shows that they are linearly stable.

However, this did not fully settle the stability problem on irreducible symmetric spaces of compact type. In particular, it had not been decided whether unstable directions exist on the spaces
\[\SU(n)\quad (\text{where }n\geq 3),\qquad E_6/F_4,\qquad F_4/\Spin(9),\]
\[\Sp(p+q)/(\Sp(p)\times\Sp(q))\quad (\text{where }p\geq q\geq2\text{ or }p=2,q=1).\]
In these cases, we know that $\Delta_L$ has eigenvalues smaller than $2\Einstein$ on the space of trace-free symmetric $2$-tensors, but it had not been checked whether the corresponding eigentensors are also divergence-free. In a recent paper \cite{SW}, U. Semmelmann and G. Weingart show the following results.

\begin{satz}
\label{sw}
\begin{enumerate}
	\item The quaternionic Grassmannians $\Sp(p+q)/(\Sp(p)\times\Sp(q))$ are linearly stable for $p=2$ and $q=1$, but unstable for $p\geq q\geq 2$.
	\item The Cayley plane $\O P^2=F_4/\Spin(9)$ is linearly stable.
\end{enumerate}
\end{satz}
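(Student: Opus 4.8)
The plan is to convert the question into harmonic analysis on the homogeneous space $M=G/K$, much as in the computations underlying Theorem~\ref{koisoclass}. Writing $\mathfrak{p}=T_oM$ for the isotropy representation, I would realize trace-free symmetric $2$-tensors as sections of the homogeneous bundle $G\times_K\Sym^2_0\mathfrak{p}$. By the Peter--Weyl theorem and Frobenius reciprocity one obtains the $G$-module decomposition
\[
\Gamma(\S^2_0(M))\;\cong\;\bigoplus_{\gamma\in\hat{G}}V_\gamma\otimes\Hom_K\!\big(V_\gamma,\Sym^2_0\mathfrak{p}\big).
\]
Since $M$ is symmetric, the Levi-Civita connection is the canonical connection, so $\nabla^*\nabla$ restricted to the $\gamma$-summand carrying the $K$-type $W_i\subseteq\Sym^2_0\mathfrak{p}$ acts by $\Cas(\gamma)-\Cas_K(W_i)$; adding the (parallel) curvature term shows that $\Delta_L$ acts by $\Cas(\gamma)$ shifted by an explicit constant depending only on $W_i$. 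Comparing these scalars with the critical value $2\Lambda$ isolates a short list of representations $\gamma$ on which $\Delta_L<2\Lambda$; these are the sole candidates for destabilization.

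The second step is to decide, for each such candidate, whether its eigentensors are genuinely tt. Tracelessness is automatic, so everything reduces to the divergence $\delta\colon\Gamma(\S^2_0(M))\to\Gamma(T^*M)$, a $G$-equivariant first-order operator. By Schur's lemma $\delta$ carries the $\gamma$-summand of $\S^2_0(M)$ into the $\gamma$-summand of $T^*M\cong G\times_K\mathfrak{p}$, and its effect is encoded by a $K$-equivariant linear map $\Hom_K(V_\gamma,\Sym^2_0\mathfrak{p})\to\Hom_K(V_\gamma,\mathfrak{p})$ built from the $K$-equivariant symbol (contraction) $\Sym^2_0\mathfrak{p}\otimes\mathfrak{p}\to\mathfrak{p}$. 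The existence of a destabilizing tt-tensor is then equivalent to this finite-dimensional map having nontrivial kernel on the low-eigenvalue summand. When the multiplicity of $\gamma$ exceeds one the map need not be diagonal, so it must be computed and diagonalized explicitly rather than estimated.

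I would then carry this out for the two families. For the quaternionic Grassmannians I would use the quaternion-Kähler structure to organize $\Sym^2_0\mathfrak{p}$ under $K=\Sp(p)\times\Sp(q)$ and to pin down the candidate $\gamma$. The expectation is that for $p\geq q\geq2$ the divergence symbol acquires a nontrivial kernel on the low-eigenvalue summand, producing a tt-eigentensor with $\Delta_L<2\Lambda$ and hence instability, whereas in the rank-one case $(p,q)=(2,1)$, i.e. the quaternionic projective plane $\mathbb{H}P^2$, the same map is injective, no destabilizing tt-tensor survives, and the space is linearly stable. For the Cayley plane $\O P^2=F_4/\Spin(9)$, which is likewise rank one, the branching $F_4\downarrow\Spin(9)$ together with the $\Spin(9)$-decomposition of $\Sym^2_0\mathfrak{p}$, where $\mathfrak{p}\cong\R^{16}$ is the spin representation, singles out one candidate representation, and I would show that its eigentensor fails to be divergence-free, yielding stability.

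The step I expect to be the main obstacle is precisely this divergence computation. Because $\delta$ is first-order it does not respect the eigenspace decomposition of $\Delta_L$ and need not act diagonally on the multiplicity spaces, so eigenvalue bounds alone cannot settle tt-ness; one must realize the symbol of $\delta$ explicitly from the Clebsch--Gordan structure of $\mathfrak{p}$ and evaluate it on the distinguished low-Casimir summand. Combined with the delicate branching for the exceptional pair $(F_4,\Spin(9))$ and with the bookkeeping of several $K$-types for the Grassmannians, this explicit control of the divergence is the technical heart of the argument, and it is what separates the unstable higher-rank Grassmannians from the stable rank-one spaces.
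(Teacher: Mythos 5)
First, a point of context: the paper does not prove Theorem~\ref{sw} at all --- it is quoted as an external result of Semmelmann and Weingart \cite{SW}, and the paper's own computations (Sections~\ref{sec:sun} and~\ref{sec:ef}) concern the different spaces $\SU(n)$ and $E_6/F_4$. So your proposal must be judged against the method of \cite{SW}, which is indeed the same Peter--Weyl/Frobenius framework you describe and the same one this paper employs: decompose $\S^2_0(M)^\C$ via (\ref{eq:peterweyl}), use that $\Delta_L$ equals the Casimir $\Cas^G_\ell$ on a symmetric space (Proposition~\ref{freudenthal}) to isolate the subcritical summands, and then decide tt-ness by computing the prototypical operator $\delta\colon\Hom_K(V_\gamma,\Sym^2_0\mathfrak{m}^\C)\to\Hom_K(V_\gamma,\mathfrak{m}^\C)$. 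Your framing of that reduction is correct, up to one imprecision: on a symmetric space there is no residual ``shift depending on $W_i$'' --- the term $q(R)$ acts on the $K$-type $W_i$ by exactly the $K$-Casimir that is subtracted in the formula for $\nabla^\ast\nabla$, so $\Delta_L$ acts by precisely $\Cas^G_\gamma$, independently of the $K$-type.

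The genuine gap is that everything which actually decides the theorem is deferred. You never determine the subcritical representations $\gamma$ for $G=\Sp(p+q)$, $K=\Sp(p)\times\Sp(q)$ or for $G=F_4$, $K=\Spin(9)$ (this requires the Casimir estimates and branching computations analogous to Lemmas~\ref{uniquesubcrit} and~\ref{uniquesubcrit2}); you never compute the multiplicities $\dim\Hom_K(V_\gamma,\Sym^2_0\mathfrak{p})$; and, most importantly, you never evaluate the divergence map itself --- the passages ``the expectation is that \dots the divergence symbol acquires a nontrivial kernel'' and ``I would show that its eigentensor fails to be divergence-free'' simply restate the two halves of the theorem as hoped-for outcomes. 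Note also that the dichotomy in part~1 cannot be guessed from structural considerations: the instability for $p\geq q\geq 2$ requires exhibiting a subcritical summand on which $\delta$ genuinely vanishes (equivalently, a nonzero kernel of the prototypical operator), while stability for $(p,q)=(2,1)$ and for $\O P^2$ requires proving nonvanishing, which in this paper's analogues is done by an explicit evaluation of the formula of Lemma~\ref{divformula} on concretely chosen basis elements. Since the entire content of Theorem~\ref{sw} lies in which of these two alternatives occurs for each space, and your proposal decides neither, it is an outline of the standard strategy rather than a proof.
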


The current article finally resolves the question of stability for the last remaining cases by proving the following.

\begin{satz}
\label{result}
The symmetric spaces $\SU(n)$, where $n\geq 3$, as well as $E_6/F_4$ are linearly stable.
\end{satz}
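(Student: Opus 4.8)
The plan is to diagonalise the Lichnerowicz Laplacian $\Delta_L$ on $\S^2_0(M)$ by harmonic analysis on $M=G/K$ and then to check, for every eigentensor whose eigenvalue lies strictly below the critical value $2\Lambda$, that it fails to be divergence-free. I realise $\SU(n)$ as the symmetric space $(\SU(n)\times\SU(n))/\Delta\SU(n)$ with isotropy module $\mathfrak{p}\cong\su(n)$ carrying the adjoint action, and $E_6/F_4$ with $\mathfrak{p}$ the $26$-dimensional module of $F_4$. By Peter--Weyl and Frobenius reciprocity,
\[\S^2_0(M)=\bigoplus_{\gamma\in\hat G}V_\gamma\otimes\Hom_K\bigl(V_\gamma,\Sym^2_0\mathfrak{p}\bigr),\]
and since $M$ is symmetric, $\Delta_L$ acts on the $V_\gamma$-summand as $\Cas_G(\gamma)\cdot\Id$ plus a universal, $K$-equivariant curvature endomorphism of the fibre, following the framework of \cite{SW}. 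Thus the spectrum of $\Delta_L$ is governed by Casimir eigenvalues together with a finite-dimensional eigenvalue problem on $\Sym^2_0\mathfrak{p}$.

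First I would invoke the known value of the smallest eigenvalue of $\Delta_L$ on $\S^2_0(M)$ from \cite{CH} to isolate the finite set of representations $V_\gamma$ for which $\Delta_L<2\Lambda$; away from this set the stability estimate is automatic. For each such dangerous $\gamma$ I would write the eigentensors explicitly as $K$-homomorphisms $V_\gamma\to\Sym^2_0\mathfrak{p}$ using the relevant branching data: tensor-product (Clebsch--Gordan) decompositions in the case of $\SU(n)$, and the branching $E_6\downarrow F_4$ in the exceptional case.

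The decisive step is to show that each eigentensor with eigenvalue strictly below $2\Lambda$ is longitudinal, i.e.\ lies in $\im\delta^*$, and is therefore not transverse. On an Einstein manifold $\Delta_L$ commutes with $\delta^*$ and restricts to the Hodge Laplacian $\Delta_H$ on $1$-forms; hence every coclosed eigenform $\xi$ of $\Delta_H$ yields a traceless $\Delta_L$-eigentensor $\delta^*\xi$ of the \emph{same} eigenvalue, and such a tensor is automatically non-transverse because $\im\delta^*$ meets $\ker\delta$ only in $0$ (Berger--Ebin splitting). Using the $G$-equivariant identification $\Omega^1(M)=\bigoplus_\gamma V_\gamma\otimes\Hom_K(V_\gamma,\mathfrak{p})$ and the Nomizu formula for the covariant derivative on $G/K$, the divergence $\delta\colon\S^2_0(M)\to\Omega^1(M)$ becomes, on each isotypic summand, an explicit composite of $K$-homomorphisms built from the action of $\mathfrak{p}$ on the fibre. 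I would match this against the $1$-form spectrum to confirm that the entire $\Delta_L$-eigenspace below $2\Lambda$ is exhausted by $\delta^*$ of $1$-form eigenmodes. The eigentensors sitting exactly at $2\Lambda$ may well stay transverse --- these are precisely the infinitesimal deformations recorded in Theorem \ref{koisoclass} --- and do not affect linear stability.

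I expect the last step to be the main obstacle: one must verify that the divergence is genuinely nonzero on the dangerous modes, equivalently that the relevant composite of $K$-homomorphisms does not vanish. This is exactly the phenomenon that separates the stable spaces treated here from the unstable quaternionic Grassmannians of Theorem \ref{sw}, where the analogous tensors happen to be transverse. Concretely it reduces to nonvanishing statements for certain Clebsch--Gordan/projection coefficients. For $\SU(n)$ these can be organised through the totally symmetric cubic invariant $d_{abc}$ on $\su(n)$, which exists precisely for $n\geq3$ and thereby explains the hypothesis; for $E_6/F_4$ the analogous role is played by the cubic $F_4$-invariant on the $26$-dimensional module. I anticipate that these nonvanishing verifications, rather than the eigenvalue bookkeeping, will carry the real weight of the argument.
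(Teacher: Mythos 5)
Your overall architecture --- Peter--Weyl and Frobenius reciprocity, Casimir bookkeeping to isolate the finitely many subcritical isotypic components, multiplicity-one branching data, and then a nonvanishing statement for the divergence on those components --- is exactly the paper's strategy, and your closing intuition is precisely what the paper implements: the nonvanishing is carried by the cubic invariants you name. The paper's equivariant maps are $A\odot B\mapsto AB^\ast+BA^\ast$ on $E\otimes_0E^\ast$ (Lemma \ref{sunpi}), i.e.\ the symmetric trace form on $\su(n)$, and $A\odot B\mapsto 2A\circ B$ on $\H_0$ (Lemma \ref{efpi}), i.e.\ the cubic $F_4$-invariant. One small correction to your setup: on a symmetric space $\Delta_L$ coincides with the Casimir of the left-regular representation on the nose (Proposition \ref{freudenthal}); there is no residual fibrewise curvature eigenvalue problem to diagonalise.

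However, the mechanism you propose for the ``decisive step'' fails as stated. You want to exhaust the subcritical eigenspace by tensors $\delta^\ast\xi$ with $\xi$ a \emph{coclosed} eigenform. But the dangerous representations here are spherical: for $\SU(n)$ the unique subcritical module is $V_\gamma=E\otimes E^\ast$, and restricted to $K$ it is $(E\otimes_0E^\ast)\oplus\C$, so $V_\gamma$ occurs with multiplicity one in $C^\infty(M)^\C$, in $\Omega^1(M)^\C$ and in $\S^2_0(M)^\C$ alike (the same holds for $\H^\C$ and $\overline{\H^\C}$ over $F_4$, since $\H^\C\cong\H_0^\C\oplus\C$). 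Because $d$ is $G$-equivariant and injective on the $V_\gamma$-component of functions (its elements are eigenfunctions with positive eigenvalue, hence nonconstant), the single copy of $V_\gamma$ inside $\Omega^1(M)^\C$ is entirely exact; consequently it contains no nonzero coclosed form, and $\delta^\ast$ of coclosed eigenmodes contributes \emph{nothing} to the dangerous components. So the claimed exhaustion is impossible: the subcritical eigenspace in $\S^2_0(M)^\C$ is nonzero, while your candidate generators vanish there. (Relatedly, for the forms that do live there --- exact ones --- $\delta^\ast\xi$ is not traceless, so your statement needs a trace correction in any case.)

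The repair is close at hand, and it yields a genuine alternative to the paper's computation. Keep your ``longitudinal'' idea but use exact forms: by multiplicity one, the $V_\gamma$-component of $\S^2_0(M)^\C$ consists exactly of the trace-free parts of the Hessians $\nabla df$ of the eigenfunctions $f$ in the $V_\gamma$-component of $C^\infty(M)^\C$; these are nonzero since $(M,g)$ is not a round sphere (Obata), and they lie in $C^\infty(M)g\oplus\im\delta^\ast$, which by the splitting quoted in Section \ref{sec:prelim} meets $\ker\delta\cap\ker\tr_g$ only in $0$. Hence no subcritical eigentensor is a tt-tensor, which is the assertion. The paper instead computes the prototypical operator $\delta$ on the one-dimensional space $\Hom_K(V_\gamma,\Sym^2_0\mathfrak{m}^\C)$ explicitly via Lemma \ref{divformula} and the maps $\pi$ above, exhibiting a single vector on which the cubic contraction is nonzero; that is your fallback plan made concrete. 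Either route works, and both lean on the same branching/multiplicity-one lemmas (Lemmas \ref{uniquesubcrit} and \ref{uniquesubcrit2}), which any complete write-up must prove rather than defer.
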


Consider a manifold $(M,g)$ that is a Riemannian product of Einstein manifolds. Then $(M,g)$ is Einstein if and only if the factors have the same Einstein constant $\Einstein$. It turns out that if $\Einstein>0$, then $(M,g)$ is always unstable (see \cite{Kr}, Prop. 3.3.7). For example, if $(M,g)$ is the Riemannian product of two Einstein manifolds $(M_i^{n_i},g_i)$ ($i=1,2$) with the same Einstein constant, then an unstable direction is given by
\[h:=n_2\pi_1^\ast g_1-n_1\pi_2^\ast g_2,\]
where $\pi_i: M\to M_i$ are the projections onto each factor, respectively. In particular, a product of symmetric spaces of compact type is always unstable since the factors have positive curvature.

If we take $(M,g)$ to be locally symmetric of compact type, we cannot in general conclude its instability from the instability of its universal cover $(\tilde{M},\tilde{g})$. The same holds for the existence of infinitesimal Einstein deformations. On the other hand, if $(\tilde{M},\tilde{g})$ is infinitesimally non-deformable (resp. stable), then the same follows for $(M,g)$. In \cite{Koi2}, N. Koiso has proved the infinitesimal non-deformability of a large class of such manifolds:

\pagebreak

\begin{satz}
Let $(M,g)$ be a locally symmetric Einstein manifold of compact type. Let $(\tilde{M},\tilde{g})$ be its universal cover and $(\tilde{M},\tilde{g})=\prod_{i=1}^N(M_i,g_i)$ its decomposition into irreducible symmetric spaces.
\begin{enumerate}
	\item For $N=1$, see Theorem \ref{koisoclass}, 1.
	\item If $N=2$ and $M_i$ are neither of the spaces listed in Theorem \ref{koisoclass}, 1., nor $G_2$ or any Hermitian space except $S^2$, then $(M,g)$ is infinitesimally non-deformable.
	\item If $N\geq3$ and $M_i$ are neither of the above nor $S^2$, then $(M,g)$ is infinitesimally non-deformable.
\end{enumerate}
\end{satz}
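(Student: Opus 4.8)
\section*{Proof proposal}

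The plan is to reduce the statement for the (possibly non-simply-connected) quotient $(M,g)$ to a spectral problem on its universal cover $(\tilde M,\tilde g)=\prod_{i=1}^N(M_i,g_i)$, and to show that under the stated hypotheses the product itself carries no infinitesimal Einstein deformation. Since the factors share the Einstein constant $\Lambda>0$, an infinitesimal Einstein deformation is a tt-tensor $h$ with $\Delta_L h=2\Lambda h$; symmetric $2$-tensors on $M$ pull back to $\Gamma$-invariant ones on $\tilde M$ (with $\Gamma=\pi_1(M)$ acting by isometries), and this identification intertwines $\Delta_L$ and preserves the tt-conditions, so the deformations of $M$ sit inside those of $\tilde M$. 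Thus it suffices to prove $\tilde M$ is infinitesimally non-deformable; the $\Gamma$-invariance is never actually needed, because we will show the entire deformation space of $\tilde M$ vanishes. The case $N=1$ is Theorem~\ref{koisoclass}, 1.

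First I would split a symmetric $2$-tensor on the product along the factors as $h=\sum_i h_{ii}+\sum_{i<j}h_{ij}$ with $h_{ii}\in\Gamma(\pi_i^\ast\Sym^2 T^\ast M_i)$ and $h_{ij}\in\Gamma(\pi_i^\ast T^\ast M_i\otimes\pi_j^\ast T^\ast M_j)$, and use that the curvature and Ricci tensors of a Riemannian product are block-diagonal. Then $\Delta_L$ preserves this decomposition and, by the Weitzenböck formula, acts on a diagonal block as $\Delta_L^{(i)}$ in the $M_i$-directions plus the scalar Laplacians $\Delta^{(j)}$ ($j\neq i$) elsewhere, while on an off-diagonal block the curvature term drops out entirely and $\Delta_L$ becomes the sum of the Hodge Laplacians on the two $1$-form legs plus scalar Laplacians elsewhere. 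Because $\Delta^H=\nabla^\ast\nabla+\Lambda$ on $1$-forms (using $\Ric=\Lambda g$) and no nonzero parallel $1$-forms exist on an irreducible factor, every $1$-form eigenvalue is $>\Lambda$, so each off-diagonal block has $\Delta_L$-eigenvalue $>2\Lambda$. Hence in a $2\Lambda$-eigentensor all off-diagonal blocks vanish, $h=\sum_i h_{ii}$ is block-diagonal, and the divergence-free condition collapses to $\delta_i h_{ii}=0$ for each $i$. Expanding $h_{ii}=\sum T\otimes\Phi$ by Peter--Weyl, $T$ runs over $\Delta_L^{(i)}$-eigentensors on $M_i$ (eigenvalue $\mu$) and $\Phi$ over eigenfunctions on the complementary factors (eigenvalue $\lambda$), with $\mu+\lambda=2\Lambda$.

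The surviving modes are then of two types. If $T$ is itself transverse-traceless on $M_i$, then $T\otimes\Phi$ is automatically tt on $\tilde M$; here the sharp lower bounds for the tt-spectrum (from \cite{CH} together with Theorems~\ref{koisoclass}, \ref{sw} and~\ref{result}, giving in particular $\mu\geq\Lambda$) combined with the Lichnerowicz estimate $\lambda_1>\Lambda$ exclude any solution with $\lambda>0$, forcing $\lambda=0$ and $\mu=2\Lambda$, i.e.\ an infinitesimal deformation of the factor $M_i$ itself, excluded by Theorem~\ref{koisoclass}, 1. The remaining modes are the conformal and gauge directions $\Phi\,g_i$ and $\delta_i^\ast\eta$, whose $M_i$-divergences must cancel; assembling a genuine product deformation out of these requires a factor whose scalar spectrum contains $2\Lambda$ (precisely the Hermitian symmetric spaces, including $S^2$) or whose coexact $1$-form Hodge spectrum contains $2\Lambda$ (the coincidence special to $G_2$). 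Concretely, if $M_k$ carries $f$ with $\Delta^{(k)}f=2\Lambda f$ and $i,j\neq k$, then $h=f\,(\pi_i^\ast g_i-\pi_j^\ast g_j)$ is a genuine tt-$2\Lambda$-eigentensor, which needs three distinct indices; for $N=2$ one instead corrects the conformal mode by the tracefree Hessian $\operatorname{Hess}f-\tfrac1{\dim M_i}(\Delta f)g_i$ so as to make it divergence-free across the product. Obata's equation makes this tracefree Hessian vanish exactly when $M_i=S^2$, which is precisely why $S^2$ is harmless for $N=2$ but must be excluded once $N\geq3$, where the three-factor construction becomes available.

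The hard part will be the spectral bookkeeping certifying that these degenerate configurations occur for \emph{exactly} the excluded list and nowhere else. This requires, for each irreducible $M_i=G_i/K_i$, computing via Frobenius reciprocity the Casimir eigenvalues of $\Delta_L^{(i)}$ on symmetric $2$-tensors, of $\Delta^H$ on $1$-forms and of $\Delta^{(i)}$ on functions, and deciding which of them equal $2\Lambda$; and it requires the coupled analysis of the divergence-free and trace-free conditions on the trace/gauge modes, which mix $\Phi\,g_i$ with $\delta_i^\ast\eta$ at a fixed eigenvalue. The most delicate point is ruling out the resonance $\mu+\lambda=2\Lambda$ for the admissible but \emph{unstable} factors, such as $\Sp(n)$ and the quaternionic Grassmannians $\Sp(p+q)/(\Sp(p)\times\Sp(q))$, whose tt-spectra dip below $2\Lambda$; this rests on the sharp lower bounds now available from the completed stability classification. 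Carrying out this case-by-case verification, and confirming that no stray divergence-free $2\Lambda$-eigentensor escapes the excluded list, is where essentially all of the work lies.
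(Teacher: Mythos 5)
First, a point of calibration: the paper does not prove this statement at all --- it is quoted as background from Koiso's paper \cite{Koi2}, so your attempt can only be compared with Koiso's original argument, not with anything in this article. Your overall architecture does match that argument in outline, and most of its steps are sound: the reduction to the universal cover is legitimate here (non-deformability descends to quotients, as the paper itself remarks); the off-diagonal blocks die because $\Delta_L(\alpha\odot\beta)=(\Delta^H\alpha)\odot\beta+\alpha\odot(\Delta^H\beta)$ and every $1$-form eigenvalue on an irreducible factor is strictly greater than $\Lambda$; the diagonal blocks reduce to resonances $\mu+\lambda=2\Lambda$ between factor spectra; and the Obata dichotomy for $S^2$ together with the three-factor construction (which, as a minor slip, must read $h=f\,(n_j\pi_i^\ast g_i-n_i\pi_j^\ast g_j)$ to be trace-free) correctly explains why $S^2$ is harmless for $N=2$ but excluded for $N\geq3$.

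The genuine error sits exactly where the excluded list is certified: your mechanism for $G_2$. You claim $G_2$ is excluded because its coexact $1$-form Hodge spectrum contains $2\Lambda$. That property holds on \emph{every} symmetric space of compact type: Killing fields always exist, and their dual $1$-forms are coexact with $\Delta^H\eta=2\Lambda\eta$, so it cannot single out $G_2$. Moreover this route produces nothing anywhere: a Killing form has $\delta^\ast\eta=0$, while a coexact non-Killing eigenform to $2\Lambda$ satisfies $\delta\delta^\ast\eta=(\Delta^H-2\Lambda)\eta+d\delta\eta=0$ on an Einstein space, so $\delta^\ast\eta$ would already be an infinitesimal Einstein deformation of the \emph{factor} --- ruled out since the factors are assumed not to lie in the list of Theorem \ref{koisoclass}, 1. --- and gauge modes with nonconstant $\Phi$ die for the same reason as the off-diagonal blocks, since $\kappa>\Lambda$ and $\lambda>\Lambda$ force $\kappa+\lambda>2\Lambda$. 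The coincidence actually special to $G_2$ is \emph{scalar}: the $7$-dimensional representation has Casimir eigenvalue $\tfrac12$ with respect to the Killing form, so the group $G_2$ has $2\Cas_{\omega_1}=1=2\Lambda$ in its function spectrum despite not being Hermitian; it therefore enters through precisely the same conformal-plus-Hessian mechanism as the Hermitian spaces, and your assertion that $2\Lambda\in\operatorname{spec}(\Delta)$ on functions happens ``precisely for the Hermitian symmetric spaces, including $S^2$'' is false. As written, your case division would fail to exclude $G_2$. Beyond this, all of the quantitative content --- that $\mu\geq\Lambda$ on tt-tensors of every irreducible factor, and that Hermitian spaces and $G_2$ are the only factors with $2\Lambda$ in the scalar spectrum --- is deferred to ``spectral bookkeeping''; you are right that this is where the work lies, but it means the proposal is a skeleton of Koiso's proof with one misdiagnosed joint, not a proof.
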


A closely related notion of stability arises in the study of the Ricci flow. The fixed points (modulo diffeomorphisms and scaling) of the Ricci flow are called Ricci solitons. The \emph{$\nu$-entropy} defined by G. Perelman is a quantity that increases monotonically under the Ricci flow. Its critical points are the \emph{shrinking gradient Ricci solitons}, which include Einstein manifolds. An Einstein metric is called \emph{$\nu$-linearly stable} if the second variation of the $\nu$-entropy is negative-semidefinite. H.-D. Cao, R. Hamilton and T. Ilmanen first studied the $\nu$-linear stability of Einstein metrics (see \cite{CHI}). It turns out that an Einstein metric is $\nu$-linearly stable if and only if $\Delta_L\geq2\Einstein$ on tt-tensors and if the first nonzero eigenvalue of the ordinary Laplacian on functions is bounded below by $2\Einstein$ as well. In particular, $\nu$-linear stability implies linear stability with respect to the Einstein-Hilbert action. In \cite{CH}, the $\nu$-linear stability of irreducible symmetric spaces of compact type is completely decided.

There is yet another notion of stability worth mentioning. It is motivated, for example, by the investigation of Anti-de Sitter product spacetimes and generalized Schwarzschild-Tangherlini spacetimes (see \cite{Di} or \cite{GHP}). An Einstein manifold $(M^n,g)$ with Einstein constant $\Einstein$ is called \emph{physically stable} if
\[\Delta_L\geq\frac{\Einstein}{n-1}\left(4-\frac{1}{4}(n-5)^2\right)=\frac{9-n}{4}\Einstein\]
on tt-tensors. This critical eigenvalue is significantly smaller than the one from stability with respect to the Einstein-Hilbert action, and even negative for $n>9$. As it turns out, all irreducible symmetric spaces of compact type are physically stable (see \cite{Di}). If $(M,g)$ is a product of at least two symmetric spaces of compact type, then the smallest eigenvalue of $\Delta_L$ on tt-tensors is actually equal to $0$; hence $(M,g)$ is physically stable if and only if $n\geq 9$.

In Section \ref{sec:prelim}, we fix the notation and definitions used throughout this work. In particular, we elaborate on the notion of stability of an Einstein metric. In Section \ref{sec:diff}, we recall some tools from the harmonic analysis of homogeneous spaces that are routinely employed. Furthermore, we prove a technical lemma that allows us to make explicit computations involving the divergence operator. A helpful formula for the dimension of tt-eigenspaces of the Lichnerowicz Laplacian is worked out in Section \ref{sec:killing}, generalizing a proposition of Koiso and utilizing properties of Killing vector fields on Einstein manifolds. Section \ref{sec:sun} uses representation theory to determine the stability of $\SU(n)$, making use of the formula from Section \ref{sec:killing}; in Section \ref{sec:ef}, the same is done for $E_6/F_4$. A different approach for proving the stability of both spaces that involves explicit computations of the divergence operator can be found in the Appendix.

\section{Preliminaries}
\label{sec:prelim}

Throughout what follows, let $(M,g)$ be a compact, orientable Riemannian manifold. Let $\nabla$ denote the Levi-Civita connection of $g$. The Riemannian curvature tensor, Ricci tensor and scalar curvature are in our convention given as
\begin{align*}
R(X,Y)Z&:=\nabla_X\nabla_YZ-\nabla_Y\nabla_XZ-\nabla_{[X,Y]}Z,\\
\Ric(X,Y)&:=\tr(Z\mapsto R(Z,X)Y),\\
\scal&:=\tr_g\Ric,
\end{align*}
respectively.\footnote{We use the index $g$ only when the metric-dependence of an object is to be emphasized.} The action of the Riemannian curvature extends to an endomorphism on tensor bundles as
\[R(X,Y)=\nabla_X\nabla_Y-\nabla_Y\nabla_X-\nabla_{[X,Y]},\]
where $\nabla$ also denotes the induced connection on the respective tensor bundle. Furthermore, let $\Sy^p(M)=\Gamma(\Sym^pT^\ast M)$ for $p\geq0$. We denote by
\[\delta:~\Sy^{p+1}(M)\to\Sy^p(M)\]
the \emph{divergence} operator on symmetric tensors, given by
\[\delta=-\sum_ie_i\lrcorner\nabla_{e_i}.\]
The space of tt-tensors, i.e. trace- and divergence-free symmetric $2$-tensors on $M$, is denoted by $\TT(M)$.

Let $\delta^\ast: \Sy^p(M)\to\Sy^{p+1}(M)$ be the formal adjoint\footnote{That is, with respect to the inner product $\langle\cdot,\cdot\rangle_g$ on $\Sym^pT^\ast M$ with orthonormal basis $(e_{i_1}^\flat\odot\ldots\odot e_{i_p}^\flat).$} of the divergence operator. It can be written as
\[\delta^\ast=\sum_ie_i^\flat\odot\nabla_{e_i},\]
where $(e_i)$ is a local orthonormal basis of $TM$. Here, $\odot$ denotes the (associative) symmetric product, defined by
\[\alpha\odot\beta:=\frac{(k+l)!}{k!l!}\sym(\alpha\otimes\beta)\]
for $\alpha\in\Sym^kT$, $\beta\in\Sym^lT$, where $T$ is any vector space and the symmetrization map $\sym: T^{\otimes k}\to\Sym^kT$ is given by
\[\sym(X_1\otimes\ldots\otimes X_k):=\frac{1}{k!}\sum_{\sigma\in S_k}X_{\sigma(1)}\otimes\ldots\otimes X_{\sigma(k)}\]
for $X_1,\ldots,X_k\in T$. This is analogous to the definition of the wedge product via the alternation map. For tensors $\alpha,\beta$ of rank $1$, we have
\[\alpha\odot\beta=\alpha\otimes\beta+\beta\otimes\alpha.\]
It should be noted that $\delta^\ast X^\flat=L_Xg$ for any vector field $X\in\X(M)$. Consequently, the kernel of $\delta^\ast$ on $\Omega^1(M)$ is (via the metric) isomorphic to the space of Killing vector fields on $(M,g)$. More generally, symmetric tensors $\alpha\in\Sy^k(M)$ with $\delta^\ast\alpha=0$ are called \emph{Killing tensors} of rank $k$, and $\delta^\ast$ is sometimes called the \emph{Killing operator}.

\begin{defn}
On tensors of any rank, the following operators are defined:
\begin{enumerate}
	\item The \emph{curvature endomorphism} $q(R)$ is defined by
\[q(R):=\sum_{i<j}(e_i\wedge e_j)_\ast R(e_i,e_j),\]
where $(e_i)$ is a local orthonormal basis of $TM$ and the asterisk indicates the natural action of $\Lambda^2T\cong\so(T)$.
	\item The \emph{Lichnerowicz Laplacian} $\Delta_L$ is defined by
	\[\Delta_L:=\nabla^\ast\nabla+q(R).\]
	Recall that on $\Omega^p(M)$, $p\geq0$, this coincides with the Hodge Laplacian $\Delta$.
\end{enumerate}
\end{defn}

On the space of Riemannian metrics on $M$, which is an open cone in $\Sy^2(M)$, the \emph{total scalar curvature functional} or \emph{Einstein-Hilbert action} is given by
\[S(g)=\int_M\scal_g\vol_g\]
for any Riemannian metric $g$ on $M$. As mentioned earlier, if we restrict this functional to the space of metrics of a fixed total volume, then Einstein metrics are precisely the critical points of the restriction of $S$.

Let $(M,g)$ be an Einstein manifold with Einstein constant $\Einstein\in\R$, that is
\[\Ric=\Einstein g,\]
and suppose that $(M,g)$ is not isometric to a standard round sphere. Denote
\[C^\infty_g(M)=\left\{f\in C^\infty(M)\,\middle|\,\int_Mf\vol_g=0\right\}.\]
It is well known (see \cite{Be}) that there is a decomposition of $\Sy^2(M)$, which is orthogonal with respect to the second variation $S''_g$ of the total scalar curvature functional, into the four summands
\[\Sy^2(M)=\R g\oplus C^\infty_g(M)g\oplus\im\delta^\ast\oplus\TT(M).\]
These correspond to infinitesimal changes in the metric by homothety, volume-preserving conformal scaling, the action of diffeomorphisms, and moving within $\Sl$, respectively. The second variation $S''_g$ is positive on $C^\infty_g(M)g$, zero on $\im\delta^\ast$ and is given by
\[S''_g(h,h)=-\frac{1}{2}\left(\Delta_Lh-2\Einstein h,h\right)_g\]
on $\TT(M)$, where it has finite coindex and nullity; that is, the maximal subspace of $\TT(M)$ where $S''_g$ is nonnegative is finite-dimensional. In fact, the null directions in $\TT(M)$ are precisely the \emph{infinitesimal Einstein deformations} of $g$, i.e. infinitesimal deformations of $g$ that preserve the Einstein property, the total volume and are orthogonal to the orbit of $g$ under diffeomorphisms.

\begin{defn}
An Einstein metric $g$ on $M$ is called
\begin{enumerate}
	\item \emph{(linearly) stable} (with respect to the Ein\-stein-Hil\-bert action) if $S''_g\leq 0$ on $\TT(M)$ or, equivalently, if $\Delta_L\geq2\Einstein$ on $\TT(M)$. Otherwise it is called \emph{(linearly) unstable}.
	\item \emph{strictly (linearly) stable} (with respect to the Ein\-stein-Hil\-bert action) if $S''_g<0$ on $\TT(M)$ or, equivalently, if $\Delta_L>2\Einstein$ on $\TT(M)$.
	\item \emph{infinitesimally deformable} if $\Delta_Lh=2\Einstein h$ for some nonzero $h\in\TT(M)$.
\end{enumerate}
\end{defn}

\section{Invariant differential operators}
\label{sec:diff}

Let $G$ be a compact Lie group with Lie algebra $\mathfrak{g}$ and $K$ a closed subgroup such that $(M=G/K,g)$ is a reductive Riemannian homogeneous space with $K$-invariant decomposition $\mathfrak{g}=\mathfrak{k}\oplus\mathfrak{m}$, where $\mathfrak{k}$ is the Lie algebra of $K$ and $\mathfrak{m}$ is the reductive complement which is canonically identified with the tangent space $T_{o}M$ at the base point $o:=eK\in M$. Recall that for some representation $\rho: K\to\Aut V$, the \emph{left-regular representation} on the space of $K$-equivariant smooth functions $C^\infty(G,V)^K$ is defined as
\[\ell: G\to\Aut C^\infty(G,V)^K:\ (\ell(x)f)(y):=f(x^{-1}y)\]
for $x,y\in G$. Furthermore, the space $C^\infty(G,V)^K$ is identified with the space of sections of the associated bundle $G\times_\rho V$ over $M$. The identification is given by
\[\Gamma(G\times_\rho V)\to C^\infty(G,V)^K:\ s\mapsto\hat{s},\]
where $\hat{s}$ is defined by $s([x])=[x,\hat{s}(x)]$ for any $x\in G$. If $V$ can be expressed in terms of the isotropy representation $\mathfrak{m}$, then $G\times_\rho V$ is a tensor bundle; for example, we have
\begin{align*}
\X(M)=\Gamma(TM)&\cong\Gamma(G\times_\rho\mathfrak{m})\cong C^\infty(G,\mathfrak{m})^K,\\
\Omega^1(M)=\Gamma(T^\ast M)&\cong\Gamma(G\times_\rho\mathfrak{m}^\ast)\cong C^\infty(G,\mathfrak{m})^K,\\
\Sy^2(M)=\Gamma(\Sym^2T^\ast M)&\cong\Gamma(G\times_\rho\Sym^2\mathfrak{m}^\ast)\cong C^\infty(G,\Sym^2\mathfrak{m})^K,\\
\Sy^2_0(M)=\Gamma(\Sym^2_0T^\ast M)&\cong\Gamma(G\times_\rho\Sym^2_0\mathfrak{m}^\ast)\cong C^\infty(G,\Sym^2_0\mathfrak{m})^K,
\end{align*}
where $\Sym^2_0$, $\Sy^2_0$ denotes the space of trace-free elements with respect to the metric. Note that the invariant Riemannian metric yields an equivalence between $\mathfrak{m}$ and $\mathfrak{m}^\ast$.

Suppose that $V$ is a complex representation. Choose a maximal torus $T$ inside $G$ with Lie algebra $\t$. Recall that up to equivalence, every irreducible finite-dimensional complex representation of $G$ is characterized by its highest weight $\gamma\in\t^\ast$. By the Peter-Weyl theorem and Frobenius reciprocity (cf. \cite{Wa}), the left-regular representation $C^\infty(G,V)^K$ can be decomposed into irreducible summands as\footnote{Here, the bar over the direct sum denotes the closure in $C^\infty(G,V)^K$ (with the $L^2$ inner product). In other words, $\bigoplus_{\gamma}V_\gamma\otimes\Hom_K(V_\gamma,V)$ is dense in $C^\infty(G,V)^K$. In fact, it is dense in $L^2(G,V)^K$, but for our purposes, it suffices to consider smooth sections.}
\begin{equation}
C^\infty(G,V)^K\cong\overline{\bigoplus_{\gamma}}V_\gamma\otimes\Hom_K(V_\gamma,V),\tag{PW}\label{eq:peterweyl}
\end{equation}
where $\gamma$ runs over all highest weights of $G$-representations and $(V_\gamma,\rho_\gamma)$ is the (up to equivalence) unique irreducible representation of $G$ with highest weight $\gamma$. For any
\[\alpha\otimes A\in V_\gamma\otimes\Hom_K(V_\gamma,V),\]
the corresponding element of $C^\infty(G,V)^K$ is defined by
\[f^A_\alpha:\ G\to V:\ x\mapsto A(\rho_\gamma(x^{-1})\alpha).\]

Since the Lichnerowicz Laplacian $\Delta_L$ on $\Gamma(G\times_\rho V)$ is a $G$-invariant differential operator, Schur's Lemma implies that on each of the isotypical subspaces
\[V_\gamma\otimes\Hom_K(V_\gamma,V),\]
$\Delta_L$ acts as an endomorphism of the finite-dimensional vector space $\Hom_K(V_\gamma,V)$, that is,
\[\Delta_Lf^A_\alpha=f^{L_\gamma(A)}_\alpha\]
for some $L_\gamma\in\End\Hom_K(V_\gamma,V)$.

In order to obtain the spectrum of $\Delta_L$, one would have to find the eigenvalues of each $L_\gamma$ -- a potentially very cumbersome task. We will shortly see that this matter is considerably simpler in the symmetric case.

Fix an $\Ad$-invariant inner product $\langle\cdot,\cdot\rangle_{\mathfrak{g}}$ on the Lie algebra $\mathfrak{g}$. If we assume that $G$ is semisimple, one such inner product is given by $-B$, where $B$ is the Killing form on $\mathfrak{g}$, defined by
\[B(X,Y):=\tr(\ad(X)\circ\ad(Y))\]
for $X,Y\in\mathfrak{g}$. Recall that for any representation $\pi: G\to\Aut W$, the Casimir operator $\Cas^G_\pi$ with respect to the chosen inner product is an equivariant endomorphism of $W$, defined as
\[\Cas^G_\pi:=-\sum_id\pi(e_i)\circ d\pi(e_i)\]
for any orthonormal basis $(e_i)$ of $\mathfrak{g}$. 

The following proposition combines two well-known results that allow us to compute the eigenvalues of $\Delta_L$ on compact symmetric spaces, the latter being a formula due to H. Freudenthal (cf. \cite{FH}).

\begin{prop}\label{freudenthal}
Let $(M=G/K,g)$ be a compact Riemannian symmetric space where the Riemannian metric is induced by an $\Ad$-invariant inner product $\langle\cdot,\cdot\rangle_{\mathfrak{g}}$ on $\mathfrak{g}$, and let $\rho: K\to\Aut V$ be a representation.
\begin{enumerate}
	\item On the left-regular representation $\Gamma(G\times_\rho V)$, the Lichnerowicz Laplacian $\Delta_L$ coincides with the Casimir operator $\Cas^G_\ell$ of the representation $\ell: G\to\Aut\Gamma(G\times_\rho V)$.
	\item On each irreducible representation $V_\gamma$, the Casimir eigenvalue is given by
	\[\Cas^G_\gamma=\langle\gamma,\gamma+2\delta_{\mathfrak{g}}\rangle_{\t^\ast},\]
	where $\delta_{\mathfrak{g}}$ is the half-sum of positive roots and $\langle\cdot,\cdot\rangle_{\t^\ast}$ is the inner product on $\t^\ast$ induced by the inner product on $\t\subset\mathfrak{g}$.
\end{enumerate}
\end{prop}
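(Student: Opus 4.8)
The plan is to prove the two assertions separately: the first by decomposing the Casimir operator along $\mathfrak{g}=\mathfrak{k}\oplus\mathfrak{m}$, the second by the classical highest-weight computation. For part (1), I would first exploit that the Casimir element is central in the universal enveloping algebra and independent of the chosen orthonormal basis, so that $\Cas^G_\ell$ may be computed using left-invariant vector fields $L_X$ (which generate the right-regular action) rather than the fields implementing $\ell$ itself. Fix an orthonormal basis $(e_i)$ of $\mathfrak{m}$ together with an orthonormal basis $(f_a)$ of $\mathfrak{k}$, so that $\Cas^G_\ell=-\sum_i L_{e_i}^2-\sum_a L_{f_a}^2$ on $C^\infty(G,V)^K$. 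For a $K$-equivariant function $\hat s$, the equivariance relation $\hat s(x\exp(tf_a))=\rho(\exp(-tf_a))\hat s(x)$ gives $L_{f_a}\hat s=-d\rho(f_a)\hat s$, whence $-\sum_a L_{f_a}^2$ acts fibrewise as the Casimir $\Cas^K_\rho$ of the isotropy representation. It then remains to identify the $\mathfrak{m}$-part with $\nabla^\ast\nabla$ and the isotropy Casimir with $q(R)$.

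Here the symmetric-space hypothesis $[\mathfrak{m},\mathfrak{m}]\subset\mathfrak{k}$ enters decisively. Because the torsion of the canonical connection is $-[\,\cdot\,,\cdot\,]_{\mathfrak{m}}$, it vanishes, so the canonical connection coincides with the Levi-Civita connection and the $L_{e_i}$ in $\mathfrak{m}$-directions implement its covariant derivative with no first-order correction terms; evaluating at the base point with $\nabla_{e_i}e_i=0$ then yields $-\sum_i L_{e_i}^2=\nabla^\ast\nabla$. For the zero-order term I would use the symmetric-space curvature formula $R(X,Y)=-\ad([X,Y])$, which acts on the bundle $V$ through $d\rho$ since $[X,Y]\in\mathfrak{k}$; substituting this into $q(R)=\sum_{i<j}(e_i\wedge e_j)_\ast R(e_i,e_j)$ and using the $\Ad$-invariance of $\langle\cdot,\cdot\rangle_{\mathfrak{g}}$ to re-expand $[e_i,e_j]$ in the basis $(f_a)$ collapses the double sum into $-\sum_a d\rho(f_a)^2=\Cas^K_\rho$. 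Combining the three identities gives $\Cas^G_\ell=\nabla^\ast\nabla+q(R)=\Delta_L$, which is the first claim.

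For part (2) I would complexify and pass to a root-adapted basis: extend an orthonormal basis $(H_k)$ of $\mathfrak{t}$ by root vectors $E_\alpha$ normalized so that $\langle E_\alpha,E_{-\alpha}\rangle=1$, so that the Casimir element reads $\sum_k H_k^2+\sum_\alpha E_\alpha E_{-\alpha}$. Separating positive and negative roots and using $[E_\alpha,E_{-\alpha}]=H_\alpha$ rewrites this as
\[\sum_k H_k^2+\sum_{\alpha>0}H_\alpha+2\sum_{\alpha>0}E_{-\alpha}E_\alpha.\]
By Schur's Lemma the Casimir is a scalar on $V_\lambda$, so I may evaluate it on a highest-weight vector $v_\lambda$: the raising operators $E_\alpha$ ($\alpha>0$) annihilate $v_\lambda$, the term $\sum_k H_k^2$ contributes $\langle\lambda,\lambda\rangle$, and $\sum_{\alpha>0}H_\alpha$ contributes $\langle\lambda,2\delta_{\mathfrak{g}}\rangle$ because $\delta_{\mathfrak{g}}=\tfrac12\sum_{\alpha>0}\alpha$. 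This yields the stated value $\Cas^G_\lambda=\langle\lambda,\lambda+2\delta_{\mathfrak{g}}\rangle_{\mathfrak{t}^\ast}$.

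The main obstacle is the first claim, and within it the identity $q(R)=\Cas^K_\rho$ together with the vanishing of first-order terms: both rely essentially on $[\mathfrak{m},\mathfrak{m}]\subset\mathfrak{k}$, since on a merely reductive homogeneous space the canonical connection has torsion and $\Cas^G_\ell$ differs from $\Delta_L$ by genuine lower-order contributions. The remaining difficulty is bookkeeping: keeping the normalizations of the $\so(\mathfrak{m})$-action $(e_i\wedge e_j)_\ast$ and the isotropy action $d\rho$ consistent, and tracking signs through the compact convention $\langle\cdot,\cdot\rangle_{\mathfrak{g}}=-B$. The Freudenthal formula is comparatively routine.
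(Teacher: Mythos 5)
Your proof is correct, but it takes a genuinely different route from the paper, which in fact offers no computation at all: part (2) is attributed to Freudenthal via \cite{FH}, and part (1) is deduced in the remark following the proposition from the general fact that on any reductive homogeneous space the standard Laplacian $\bar\Delta=\bar\nabla^\ast\bar\nabla+q(\bar R)$ of the Ambrose--Singer connection equals $\Cas^G_\ell$ (cf.\ \cite{SW}), combined with the observation that on a symmetric space this connection is the Levi-Civita connection. Your Parthasarathy-type splitting $\Cas^G_\ell=-\sum_iL_{e_i}^2-\sum_aL_{f_a}^2$, the identification $L_{f_a}\hat{s}=-d\rho(f_a)\hat{s}$, and the symmetric-space identities $-\sum_iL_{e_i}^2=\nabla^\ast\nabla$ and $q(R)=\Cas^K_\rho$ prove the same statement directly; the key step closes correctly, since $\Ad$-invariance gives $\sum_{i<j}\langle[e_i,e_j],f_a\rangle(e_i\wedge e_j)_\ast=d\rho(f_a)$, collapsing $q(R)$ to $-\sum_ad\rho(f_a)^2$. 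Your route buys a self-contained argument needing only $[\mathfrak{m},\mathfrak{m}]\subset\mathfrak{k}$; the paper's buys modularity, since $\bar\Delta=\Cas^G_\ell$ holds even with torsion, isolating the symmetric hypothesis in the single statement that the canonical connection is Levi-Civita. Two points to make explicit: vanishing torsion alone does not force the canonical connection to be Levi-Civita --- you also need metricity, which holds because every $G$-invariant tensor, in particular $g$, is parallel for the Ambrose--Singer connection; and $q(R)=\Cas^K_\rho$ presupposes that $V$ is a tensor representation, so that $(e_i\wedge e_j)_\ast$ and $d\rho$ act on the same space, consistent with the paper's definition of $q(R)$ on tensor bundles only.
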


\begin{bem}
The first statement is a consequence of a more general result. Let $G$ be a compact Lie group and $(M=G/K,g)$ be a reductive Riemannian homogeneous space. To the reductive decomposition corresponds a canonical $G$-invariant connection on $M$ (also called the \emph{Ambrose-Singer connection}), which we denote by $\bar\nabla$. This connection in turn defines a curvature tensor $\bar R$ and an analogue to the Lichnerowicz Laplacian via
\[\bar\Delta:=\bar\nabla^\ast\bar\nabla+q(\bar R),\]
called the \emph{standard Laplacian} of this connection (introduced in \cite{SW1}). Then, in fact, $\bar\Delta=\Cas^G_\ell$ on $\Gamma(G\times_\rho V)$. The above statement follows when we note that on Riemannian symmetric spaces, the Ambrose-Singer connection coincides with the Levi-Civita connection.
\end{bem}

According to (\ref{eq:peterweyl}), we can write the complexified left-regular representation on trace-free symmetric $2$-tensors as
\[\Sy^2_0(M)^\C\cong\overline{\bigoplus_{\gamma}}V_\gamma\otimes\Hom_K(V_\gamma,\Sym^2_0\mathfrak{m}^\C).\]
Recall that irreducible symmetric spaces of compact type can be endowed with a Riemannian metric induced by the Killing form (the so-called standard metric). In this case, the critical eigenvalue of $\Delta_L$ is $2\Einstein=1$. Supposing we have a representation $V_\gamma$ with subcritical Casimir eigenvalue $\Cas^G_\gamma<1$ occurring in this decomposition, it remains to check whether the tensors in the corresponding subspace are divergence-free. By Schur's Lemma, the $G$-invariant operator
\[\delta: \Sy^2_0(M)^\C\to\Omega^1(M)^\C\]
is constant on each irreducible subspace. This means that we can regard $\delta$ as a linear mapping
\[\delta: \Hom_K(V_\gamma,\Sym^2_0\mathfrak{m}^\C)\to\Hom_K(V_\gamma,\mathfrak{m}^\C),\]
the so-called \emph{prototypical differential operator} associated to $\delta$ and $V_\gamma$. For a further discussion of invariant differential operators on homogeneous spaces, we refer the reader to Section 2 of \cite{SW}.

The following lemma is of use when we need to calculate $\delta$ explicitly. A derivation of essentially the same formula can also be found in \cite{SW}, Section 2.

\begin{lem}\label{divformula}
Suppose $(M,g)$ is a Riemannian symmetric space. Let $h\in\Sy^2(M)^\C$ correspond to an element
\[\alpha\otimes A\in V_\gamma\otimes\Hom_K(V_\gamma,\Sym^2\mathfrak{m}^\C)\]
in the decomposition {\normalfont (\ref{eq:peterweyl})} of $\Sy^2(M)^\C$. Let further $(e_i)$ be an orthonormal basis of $\mathfrak{m}$. Then we have
\[(\delta h)_{o}(X)=\sum_i\langle A(d\rho_\gamma(e_i)\alpha),e_i\odot X\rangle\]
for any $X\in\mathfrak{m}\cong T_{o}M$.
\end{lem}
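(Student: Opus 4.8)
\section*{Proof proposal}

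The plan is to reduce the whole computation to the origin $o=[e]$ and to translate the Riemannian divergence into the language of equivariant functions. Since $\delta h=-\sum_i e_i\lrcorner\nabla_{e_i}h$, evaluating at $o$ and pairing with $X\in\mathfrak{m}\cong T_{[e]}M$ gives $(\delta h)_{[e]}(X)=-\sum_i(\nabla_{e_i}h)(e_i,X)$, where $(e_i)$ is an orthonormal basis of $\mathfrak{m}$. Everything therefore rests on expressing the covariant derivative $\nabla_{e_i}h$ at $o$ in terms of the data $\alpha\otimes A$. The decisive structural input is that on a symmetric space the Levi-Civita connection agrees with the canonical (Ambrose--Singer) connection, so I may compute $\nabla$ directly through the homogeneous-bundle formalism rather than through Christoffel symbols.

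First I would establish the pointwise formula $\widehat{\nabla_X s}(e)=\frac{d}{dt}\big|_{t=0}\hat s(\exp tX)$ for any section $s\leftrightarrow\hat s$ and any $X\in\mathfrak{m}$. This is where the symmetric-space hypothesis enters: parallel transport along the geodesic $\gamma(t)=\exp(tX)\cdot o$ is implemented by the transvection $\exp(tX)$ acting on the tensor bundle, so a section is $\nabla$-parallel along $\gamma$ precisely when its equivariant function is constant on $t\mapsto\exp tX$, and the covariant derivative measures exactly the failure of this constancy. Equivalently, the correction term in the canonical-connection formula drops out because $[\mathfrak{m},\mathfrak{m}]\subseteq\mathfrak{k}$. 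Applying this to $h\leftrightarrow\hat h=f^A_\alpha$, where $\hat h(x)=A(x^{-1}\alpha)$, and differentiating $\hat h(\exp te_i)=A(\exp(-te_i)\alpha)$ at $t=0$ yields $\widehat{\nabla_{e_i}h}(e)=-A(d\rho(e_i)\alpha)\in\Sym^2\mathfrak{m}^\C$, the sign coming from the inverse in the definition of $f^A_\alpha$.

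It then remains to contract. Under the metric identification $\Sym^2\mathfrak{m}^\ast\cong\Sym^2\mathfrak{m}$, the symmetric bilinear form represented by $S\in\Sym^2\mathfrak{m}$ evaluates as $S(e_i,X)=\langle S,e_i\odot X\rangle$, once the inner product on $\Sym^2\mathfrak{m}$ and the convention for $\odot$ are normalized consistently. Substituting $S=\widehat{\nabla_{e_i}h}(e)=-A(d\rho(e_i)\alpha)$ and summing, the minus sign from the divergence and the minus sign from the inverse cancel, and I obtain $(\delta h)_{[e]}(X)=\sum_i\langle A(d\rho(e_i)\alpha),e_i\odot X\rangle$, as claimed.

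I expect the main obstacle to be conceptual rather than computational: justifying the clean identity $\widehat{\nabla_X s}(e)=\frac{d}{dt}\big|_{t=0}\hat s(\exp tX)$ with no connection correction, i.e.\ correctly invoking the realization of parallel transport along geodesics by transvections on a symmetric space. The remaining difficulties are pure bookkeeping --- tracking the two sign flips and the normalization constants hidden in the identification $\Sym^2\mathfrak{m}^\ast\cong\Sym^2\mathfrak{m}$ and in the definition of $\odot$ --- which must be pinned down so that the overall constant comes out to exactly $1$.
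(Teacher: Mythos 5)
Your proposal is correct and follows essentially the same route as the paper: both reduce to the base point, invoke the coincidence of the Levi-Civita and Ambrose--Singer connections on a symmetric space so that $(\nabla h)_{[e]}$ is read off from $d\hat{h}_e$ in $\mathfrak{m}$-directions, and then contract against $e_i\odot X$, with the sign from the divergence cancelling the sign from $x\mapsto\rho(x^{-1})\alpha$. The only difference is expository: you spell out (via transvections realizing parallel transport along geodesics) the connection fact the paper merely cites, and you explicitly flag the normalization hidden in the identification $\Sym^2\mathfrak{m}^\ast\cong\Sym^2\mathfrak{m}$, which is harmless here since only the nonvanishing of $\delta$ is used downstream.
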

\begin{proof}
The element of $C^\infty(G,\Sym^2\mathfrak{m}^\C)^K$ corresponding to $h\in\Sy^2(M)$ is given by
\[\hat{h}=f^A_\alpha:\ G\to\Sym^2\mathfrak{m}^\C:\ x\mapsto A(\rho_\gamma(x^{-1})\alpha),\]
where $\rho_\gamma$ is the representation of $G$ on $V_\gamma$. The covariant derivative of $h$ at the base point may be expressed by
\[(\nabla h)_{o}(X,Y)=\langle d\hat{h}_e,X\odot Y\rangle\]
for $X,Y\in\mathfrak{m}\cong T_{o}M$, since $\nabla$ coincides with the Ambrose-Singer connection on $M$ as a reductive homogeneous space. This implies that
\begin{align*}
(\delta h)_{o}(X)&=-\sum_ie_i\lrcorner\nabla_{e_i}h(X)=-\sum_i\nabla_{e_i}h(e_i,X)=-\sum_i\langle d\hat{h}(e_i),e_i\odot X\rangle\\
&=-\sum_i\langle df_\alpha^A(e_i),e_i\odot X\rangle=\sum_i\langle A(d\rho_\gamma(e_i)\alpha),e_i\odot X\rangle.
\end{align*}
\end{proof}

\section{tt-Eigenspaces of the Lichnerowicz Laplacian}
\label{sec:killing}

We return to the general setting of a compact Einstein manifold $(M,g)$. Define
\[\theta: \Omega^1(M)\to\Sy^2_0(M):\ \alpha\mapsto\delta^\ast\alpha+\frac{2}{n}\delta\alpha\cdot g,\]
so that $\theta\alpha$ is precisely the trace-free part of $\delta^\ast\alpha\in\Sy^2(M)$. The kernel of this operator is (via the metric) isomorphic to the space of \emph{conformal Killing fields} on $(M,g)$, that is, the space of vector fields $X\in\X(M)$ such that $L_Xg=fg$ for some $f\in C^\infty(M)$. We thus call $\theta$ the \emph{conformal Killing operator}.

The following lemma is a generalization of a proposition by Koiso \cite[Prop. 3.3]{Koi2}. For the proof, we refer the reader to the Appendix.

\begin{lem}\label{dimtt}
Let $(M,g)$ be a compact Einstein manifold of dimension $n\geq3$. For any $\lambda\in\R$, the dimension of the eigenspace of $\Delta_L$ to the eigenvalue $\lambda$ on tt-tensors is given by
\begin{align*}
\dim\ker(\Delta_L-\lambda)\big|_{\TT(M)}=&\dim\ker(\Delta_L-\lambda)\big|_{\Sy^2_0(M)}-\dim\ker(\Delta-\lambda)\big|_{\Omega^1(M)}\\
&+\dim\left(\ker(\Delta-\lambda)\big|_{\Omega^1(M)}\cap\ker\theta\right).
\end{align*}
\end{lem}

At first glance, the third term on the right hand side of the above formula does not look very amenable to computation. However, matters are made easier if we observe the following properties of (conformal) Killing vector fields on Einstein manifolds, both of which are proven in the Appendix.

\begin{lem}\label{confkilling}
On any compact Einstein manifold $(M,g)$ not isometric to a standard round sphere, conformal Killing fields are actually Killing, that is, $L_X g=fg$ for some ${f\in C^\infty(M)}$ implies $f=0$. Equivalently, $\ker\theta=\ker\delta^\ast$ on $\Omega^1(M)$.
\end{lem}

\begin{lem}\label{killingcrit}
Any Killing field $X\in\X(M)$ on an Einstein manifold with Einstein constant $\Einstein$ satisfies
\[\Delta X^\flat=2\Einstein X^\flat.\]
Equivalently, $\ker\delta^\ast\subset\ker(\Delta-2\Einstein)$ on $\Omega^1(M)$.
\end{lem}

If we assume that $(M,g)$ is not isometric to a standard sphere, we can immediately conclude that the intersection $\ker(\Delta-\lambda)\big|_{\Omega^1(M)}\cap\ker\theta$ is trivial if $\lambda\neq2\Einstein$. By virtue of Lemma \ref{dimtt}, we obtain the following.

\begin{kor}\label{dimtt2}
Let $(M,g)$ be a compact Einstein manifold that is not isometric to a standard round sphere, and let $\Einstein$ be its Einstein constant. For any $\lambda\neq 2\Einstein$, the dimension of the eigenspace of $\Delta_L$ to the eigenvalue $\lambda$ on tt-tensors is given by
\begin{align*}
\dim\ker(\Delta_L-\lambda)\big|_{\TT(M)}=&\dim\ker(\Delta_L-\lambda)\big|_{\Sy^2_0(M)}-\dim\ker(\Delta-\lambda)\big|_{\Omega^1(M)}.
\end{align*}
\end{kor}

\begin{bem}
If we set $\lambda=2\Einstein$ in Lemma \ref{dimtt} and note that
\[\ker(\Delta-2\Einstein)\big|_{\Omega^1(M)}\cap\ker\theta=\ker\delta^\ast\big|_{\Omega^1(M)}\]
(as Koiso did in his proof of \cite[Prop.~3.3]{Koi2}), we recover the original formula for the critical eigenvalue
\begin{align*}
\dim\ker(\Delta_L-2\Einstein)\big|_{\TT(M)}=\,&\dim\ker(\Delta_L-2\Einstein)\big|_{\Sy^2_0(M)}-\dim\ker(\Delta-2\Einstein)\big|_{\Omega^1(M)}\\
&+\dim\ker\delta^\ast\big|_{\Omega^1(M)}.
\end{align*}
\end{bem}

\begin{bem}
Although the dimension formula of Lemma \ref{dimtt} works on any compact Einstein manifold $(M,g)$, it is worth mentioning that if additionally, $(M,g)$ carries the structure of a Riemannian homogeneous space $M=G/K$, the result can be refined in terms of irreducible representations of $G$. Namely, if $V_\gamma$ is an irreducible representation of $G$, then the multiplicity of $V_\gamma$ in the (complexified) left-regular representation on tt-tensors is given by
\begin{align*}
\dim\Hom_G(V_\gamma,\TT(M)^\C)=\,&\dim\Hom_K(V_\gamma,\Sym^2_0\mathfrak{m}^\C)-\dim\Hom_K(V_\gamma,\mathfrak{m}^\C)\\
&+\dim\Hom_G(V_\gamma,(\ker\theta)^\C).
\end{align*}
As in the proof of Lemma \ref{dimtt}, the dimension formula essentially arises from the short exact sequence
\[0\longrightarrow\ker\theta\stackrel{\subset}{\longrightarrow}\Omega^1(M)\stackrel{\theta}{\longrightarrow}\Sy^2_0(M)\stackrel{P}{\longrightarrow}\TT(M)\longrightarrow0\]
and the fact that the Laplacian commutes with every arrow. In the homogeneous case, we note that we have a short exact sequence of $G$-representations and use Frobenius reciprocity to arrive at the statement.
\end{bem}

\section{The symmetric space $\SU(n)$}
\label{sec:sun}

Throughout what follows, let $n\geq3$. As a symmetric space, $\SU(n)=G/K$ where $G=\SU(n)\times\SU(n)$ and $K=\SU(n)$ is diagonally embedded, i.e. via
\[\SU(n)\hookrightarrow\SU(n)\times\SU(n):\ k\mapsto (k,k).\]
Let $\mathfrak{g}$ and $\mathfrak{k}$ denote the corresponding Lie algebras of $G$ and $K$, respectively. We endow $M$ with the standard metric $g$ induced by the Killing form on $\mathfrak{g}$. Hence, $M$ is Einstein with critical eigenvalue $2\Einstein=1$. The reductive decomposition of $\mathfrak{g}$ with respect to $g$ is given by
\[\mathfrak{g}=\tilde{\mathfrak{k}}\oplus\mathfrak{m},\]
where
\begin{align*}
\tilde{\mathfrak{k}}&=\{(X,X)\,|\,X\in\mathfrak{k}\},\\
\mathfrak{m}&=\{(X,-X)\,|\,X\in\mathfrak{k}\}.
\end{align*}
The $K$-representations $\mathfrak{k}$, $\tilde{\mathfrak{k}}$ and $\mathfrak{m}$ are all equivalent. We denote by $E=\C^n$ the standard representation of $K$.

\begin{lem}\label{uniquesubcrit}
Let $V_\gamma$ be an irreducible complex representation of $G$ with $\Cas_\gamma^G<1$ and
\[\Hom_K(V_\gamma,\Sym^2_0\mathfrak{k}^\C)\neq0.\]
Then $V_\gamma$ is equivalent to one of the $G$-representations $E\otimes E^\ast$ and $E^\ast\otimes E$. In fact,
\[\dim\Hom_K(V_\gamma,\Sym^2_0\mathfrak{k}^\C)=1\]
and the Casimir eigenvalue is $\Cas_\gamma^G=\frac{(n-1)(n+1)}{n^2}$.
\end{lem}
\begin{proof}
Let $\t$ be the torus of diagonal matrices in $\mathfrak{k}$. The dual $\t^\ast$ is generated by the weights $\varepsilon_1,\ldots,\varepsilon_n$ of the defining representation $E$. Explicitly,
\[\varepsilon_j(X)=X_j,\quad 1\leq j\leq n\]
for $X=\diag(\i X_1,\ldots,\i X_n)\in\t$. Note that $\varepsilon_1+\ldots+\varepsilon_n=0$.

Fix the ordering on roots and weights such that the simple roots of $\mathfrak{k}$ are given by
\[\varepsilon_j-\varepsilon_{j+1},\quad1\leq j\leq n-1.\]
The semigroup of dominant integral weights is then generated by the fundamental weights
\[\omega_j=\sum_{k=1}^j\varepsilon_j,\quad 1\leq j\leq n-1,\]
cf. \cite[§15.1]{FH}. The highest weights of representations of $K$, i.e. all the dominant integral weights, are precisely the linear combinations
\[\gamma=\sum_{r=1}^{n-1}a_r\omega_r\]
with coefficients $a_r\in\N_0$. The fundamental weights themselves correspond to the representations
\[V_{\omega_r}=\Lambda^rE\cong\Lambda^{n-r}E^\ast.\]
Let $\gamma,\gamma'\in\t^\ast$ be two dominant integral weights. In particular, they satisfy
\[\langle\gamma,\gamma'\rangle_{\t^\ast}\geq0.\]
Using Freudenthal's formula for the Casimir operator $\Cas^K_\gamma$ of a $K$-re\-pre\-sen\-ta\-tion $V_\gamma$, this implies the estimate
\begin{align*}
\Cas_{\gamma+\gamma'}^K&=\langle\gamma+\gamma'+2\delta_{\mathfrak{k}},\gamma+\gamma'\rangle_{\t^\ast}=\langle\gamma+2\delta_{\mathfrak{k}},\gamma\rangle_{\t^\ast}+2\langle\gamma,\gamma'\rangle_{\t^\ast}+\langle\gamma'+2\delta_{\mathfrak{k}},\gamma'\rangle_{\t^\ast}\\
&\geq\langle\gamma+2\delta_{\mathfrak{k}},\gamma\rangle_{\t^\ast}+\langle\gamma'+2\delta_{\mathfrak{k}},\gamma'\rangle_{\t^\ast}=\Cas_{\gamma}^K+\Cas_{\gamma'}^K.
\end{align*}
In particular, we obtain
\begin{equation}\Cas_\gamma^K\geq\sum_ra_r\Cas_{\omega_r}^K\label{casineq}\tag{$\ast$}\end{equation}
for $\gamma=\sum_{r=1}^{n-1}a_r\omega_r$.

The Casimir eigenvalues of the fundamental representations are given as
\[\Cas_{\omega_r}^K=\frac{(n+1)r(n-r)}{2n^2}\]
for $r=1,\ldots,n-1$. Note that this expression is symmetric around $r=\frac{n}{2}$ and strictly increasing for $r\leq\frac{n}{2}$. Furthermore, we can compute that
\begin{align*}
\Cas^K_{\omega_1}&=\frac{(n+1)(n-1)}{2n^2}<1,\\
\Cas^K_{\omega_2}&=\frac{(n+1)(n-2)}{n^2}<1,\\
\Cas^K_{\omega_3}&=\begin{cases}
\frac{7}{8}<1,&n=6,\\[4pt]
\frac{48}{49}<1,&n=7,\\[4pt]
\frac{3(n+1)(n-3)}{2n^2}>1,&n\geq8,
\end{cases}\\
\Cas^K_{\omega_1}+\Cas^K_{\omega_2}&>1,\ n\geq4,\\
\Cas^K_{2\omega_1}&>1,\\
\Cas^K_{\omega_1+\omega_{n-1}}&=1,
\end{align*}
cf. table on p.~15 of \cite{SW}. Combining the above with inequality (\ref{casineq}), we can deduce that if $\gamma$ is a highest weight with $\Cas_\gamma^K<1$, then necessarily
\[\gamma\in\{0,\omega_1,\omega_{n-1},\omega_2,\omega_{n-2},\underbrace{\omega_3,\omega_{n-3}}_{\text{if }n=6,7}\}.\]
These dominant integral weights are, respectively, highest weights of the representations $\C$, $E$, $E^\ast$, $\Lambda^2E$, $\Lambda^2E^\ast$, $\Lambda^3E$, $\Lambda^3E^\ast$ of $K$.

The irreducible representations of $G=K\times K$ are precisely the tensor products of irreducible representations of $K$. Let $\gamma,\gamma'$ be highest weights of $K$-re\-pre\-sen\-ta\-tions such that
\[\Cas^G_{(\gamma,\gamma')}=\Cas^K_\gamma+\Cas^K_{\gamma'}<1\]
holds. Assuming that $\gamma,\gamma'\neq0$, we conclude that $\gamma,\gamma'\in\{\omega_1,\omega_{n-1}\}$. This yields the four pairwise inequivalent $G$-representations $E\otimes E$, $E\otimes E^\ast$, $E^\ast\otimes E$ and $E^\ast\otimes E^\ast$. Furthermore, in the case of $\gamma=0$ or $\gamma'=0$ we obtain the representations of $K$ that were listed above, composed with the projection onto one factor,
\[G\to K:\ (k_1,k_2)\mapsto k_1\quad\text{or}\quad(k_1,k_2)\mapsto k_2,\]
respectively. By restricting the mentioned $G$-representations to $K$ via the embedding
\[K\to G:\ k\mapsto(k,k),\]
we again obtain the irreducible $K$-representations $\C$, $E$, $E^\ast$, $\Lambda^2E$, $\Lambda^2E^\ast$, $\Lambda^3E$, $\Lambda^3E^\ast$ as well as the tensor product representations $E\otimes E$, $E\otimes E^\ast$ and $E^\ast\otimes E^\ast$. The latter are not irreducible, but decompose into irreducible summands as follows:
\begin{align*}
E\otimes E&=\Sym^2E\oplus\Lambda^2E,\\
E\otimes E^\ast&=E\otimes_0E^\ast\oplus\C,\\
E^\ast\otimes E^\ast&=\Sym^2E^\ast\oplus\Lambda^2E^\ast.
\end{align*}
Here $E\otimes_0E^\ast$ is the set of trace-free elements of $E\otimes E^\ast$ when regarded as $n\times n$-matrices over $\C$. As a representation of $K$, we have
\[E\otimes_0E^\ast\cong V_{\omega_1+\omega_{n-1}}\cong\mathfrak{k}^\C.\]

The $K$-representation $\Sym^2\mathfrak{k}^\C\cong\Sym^2(E\otimes_0E^\ast)$ appears on one hand as a summand of
\[\Sym^2(E\otimes E^\ast)\cong\Sym^2(E\otimes_0E^\ast\oplus\C)\cong\Sym^2(E\otimes_0E^\ast)\oplus E\otimes_0E^\ast\oplus\C.\]
On the other hand, the symmetric power of the tensor product is given by\footnote{This is a consequence of, for example, the formula $\Sym^d(V\otimes W)=\bigoplus\mathbb{S}_\lambda(V)\otimes\mathbb{S}_\lambda(W)$ in \cite[Ex.~6.11]{FH}.}
\[\Sym^2(E\otimes E^\ast)\cong\Sym^2E\otimes\Sym^2E^\ast\oplus\Lambda^2E\otimes\Lambda^2E^\ast.\]
The tensor products $\Sym^2E\otimes\Sym^2E^\ast$ and $\Lambda^2E\otimes\Lambda^2E^\ast$ can in turn be decomposed into
\begin{align*}
\Sym^2E\otimes\Sym^2E^\ast&\cong V_{2\omega_1+2\omega_{n-1}}\oplus V_{\omega_1+\omega_{n-1}}\oplus\C,\\
\Lambda^2E\otimes\Lambda^2E^\ast&\cong\begin{cases}
E^\ast\otimes E\cong V_{\omega_1+\omega_{n-1}}\oplus\C,&n=3,\\
V_{\omega_2+\omega_{n-2}}\oplus V_{\omega_1+\omega_{n-1}}\oplus\C,&n\geq4.
\end{cases}
\end{align*}
By comparing summands, we see that
\[\Sym^2(E\otimes_0E^\ast)\cong V_{2\omega_1+2\omega_{n-1}}\oplus\underbrace{V_{\omega_2+\omega_{n-2}}}_{\text{if }n\geq4}\oplus\, E\otimes_0E^\ast\oplus\C.\]
Hence, the trace-free part is given by
\[\Sym^2_0(E\otimes_0E^\ast)\cong V_{2\omega_1+2\omega_{n-1}}\oplus\underbrace{V_{\omega_2+\omega_{n-2}}}_{\text{if }n\geq4}\oplus\, E\otimes_0E^\ast.\]
Now that we have decomposed the relevant representations into irreducible summands, we recognize that $E\otimes E^\ast$ and $E^\ast\otimes E$ are the only two of the specified subcritical representations of $G$ that, after restriction to $K$, have a common summand with $\Sym^2_0\mathfrak{k}^\C$. In each case, the summand in question $E\otimes_0E^\ast\cong\mathfrak{k}^\C$ appears with multiplicity $1$; hence we have
\[\dim\Hom_K(E\otimes E^\ast,\Sym^2_0\mathfrak{k}^\C)=\dim\Hom_K(E^\ast\otimes E,\Sym^2_0\mathfrak{k}^\C)=1.\]
Moreover, both $G$-representations exhibit the same Casimir eigenvalue
\[\Cas^G_{(\omega_1,\omega_{n-1})}=\Cas^G_{(\omega_{n-1},\omega_1)}=\Cas^K_{\omega_1}+\Cas^K_{\omega_{n-1}}=\frac{(n-1)(n+1)}{n^2}.\]
\end{proof}

According to Lemma \ref{uniquesubcrit}, the only representations of $G$ (up to equivalence) with subcritical Casimir eigenvalue that occur in decomposition (\ref{eq:peterweyl}) of $\Sy^2_0(M)^\C$ are $E\otimes E^\ast$ and $E^\ast\otimes E$, and we have
\[\dim\Hom_K(E\otimes E^\ast,\Sym^2_0\mathfrak{m}^\C)=\dim\Hom_K(E^\ast\otimes E,\Sym^2_0\mathfrak{m}^\C)=1\]
(recall that $\mathfrak{m}\cong\mathfrak{k}$), i.e. the summand occurs with multiplicity $1$. It remains to check whether the tensors in the corresponding subspaces are divergence-free. Since
\[E\otimes E^\ast\cong\mathfrak{k}^\C\oplus\C\]
as a representation of $K$, we have
\[\dim\Hom_K(E\otimes E^\ast,\mathfrak{m}^\C)=\dim\Hom_K(E^\ast\otimes E,\mathfrak{m}^\C)=1,\]
meaning that both summands also occur in the left-regular representation $\Omega^1(M)$ with the same multiplicity. It now follows from Corollary \ref{dimtt2} that
\[\dim\ker(\Delta_L-\lambda)\big|_{\TT(M)}=0\]
for $\lambda=\frac{(n-1)(n+1)}{n^2}$. Since this is the only subcritical eigenvalue on $\Sy^2_0(M)$, we have shown the following.

\begin{prop}\label{sun}
The symmetric space $\SU(n)$ is linearly stable.
\end{prop}

\section{The symmetric space $E_6/F_4$}
\label{sec:ef}

Let $(\H,\circ)$ be the Albert algebra, where $\H$ is the set of Hermitian $3\times3$-matrices over the octonions, i.e.
\[\H:=\left\{\begin{pmatrix}	a&x&\bar{y}\\\bar{x}&b&z\\y&\bar{z}&c\end{pmatrix}\middle|a,b,c\in\R,\ x,y,z\in\O\right\},\]
and with Jordan multiplication defined by
\[X\circ Y:=\frac{1}{2}(XY+YX).\]
The exceptional Lie group $E_6$ can be realized as
\[E_6:=\left\{\alpha\in\Aut_\C\H^\C\,\middle|\,\alpha\text{ preserves determinant and inner product}\right\},\]
while $F_4$ is defined as the set of algebra automorphisms
\[F_4:=\Aut(\H,\circ).\]
By complex-linearly extending linear automorphisms of $\H$, one obtains the inclusion $\Aut_\R\H\subset\Aut_\C\H^\C$. In this sense, we have $F_4\subset E_6$. In fact,
\[F_4=E_6\cap\Aut_\R\H.\]
As a representation of $E_6$, $\H^\C$ is irreducible. As an $F_4$-representation, $\H$ decomposes into the irreducible summands
\[\H\cong\H_0\oplus\R,\]
where $\H_0$ is the set of trace-free elements of $\H$. An invariant inner product on $\H$ is defined by
\[\langle A,B\rangle:=\tr(A\circ B)\]
for $A,B\in\H$. An orthogonal basis of $\H$ (cf. Section 2.1 of \cite{Yo}) is given by the matrices
\[E_1:=\begin{pmatrix}
	1&0&0\\
	0&0&0\\
	0&0&0
\end{pmatrix},\quad E_2:=\begin{pmatrix}
	0&0&0\\
	0&1&0\\
	0&0&0
\end{pmatrix},\quad E_3:=\begin{pmatrix}
	0&0&0\\
	0&0&0\\
	0&0&1
\end{pmatrix},\]
\[F_1(x):=\begin{pmatrix}
	0&0&0\\
	0&0&x\\
	0&\bar{x}&0
\end{pmatrix},\quad F_2(x):=\begin{pmatrix}
	0&0&\bar{x}\\
	0&0&0\\
	x&0&0
\end{pmatrix},\quad F_3(x):=\begin{pmatrix}
	0&x&0\\
	\bar{x}&0&0\\
	0&0&0
\end{pmatrix},\]
where $x$ runs through the standard basis of $\O$ as a real vector space.

In this section, we consider the Riemannian symmetric space $M=E_6/F_4$ equipped with the standard metric (hence with critical eigenvalue $2\Einstein=1$). The reductive decomposition of $\mathfrak{e}_6$ with respect to the standard metric is given by
\[\mathfrak{e}_6=\mathfrak{f}_4\oplus\mathfrak{m},\]
where $\mathfrak{m}\cong\H_0$ as a representation of $F_4$.

\begin{lem}\label{uniquesubcrit2}
Let $V_\gamma$ be an irreducible complex representation of $E_6$ with $\Cas_\gamma^{E_6}<1$ and
\[\Hom_{F_4}(V_\gamma,\Sym^2_0\H_0^\C)\neq0.\]
Then $V_\gamma$ is equivalent to one of the $E_6$-representations $\H^\C$ and $\overline{\H^\C}$. In fact,
\[\dim\Hom_{F_4}(\H^\C,\Sym^2_0\H_0^\C)=\dim\Hom_{F_4}(\overline{\H^\C},\Sym^2_0\H_0^\C)=1,\]
and the Casimir eigenvalue is $\Cas_\gamma^G=\frac{13}{18}$.
\end{lem}
\begin{proof}
We abstain from specifying a particular choice of simple root system and fundamental weights for $E_6$ and $F_4$, since we are merely interested in the corresponding fundamental representations of the respective Lie group. Following the enumerative convention of Bourbaki (as used by the software package LiE), if we denote the fundamental weights of $E_6$ by $\omega_1,\ldots,\omega_6$ and of $F_4$ by $\eta_1,\ldots,\eta_4$, then the associated representations are identified as
\begin{align*}
 V_{\omega_1}&=\mathbf{27}\cong\H^\C,&V_{\omega_2}&=\mathbf{78}\cong\mathfrak{e}_6^\C,&V_{\omega_3}&=\mathbf{351}\cong\Lambda^2\H^\C,\\
 V_{\omega_4}&=\mathbf{2925}\cong\Lambda^3\H^\C,&V_{\omega_5}&=\overline{\mathbf{351}}\cong\Lambda^2\overline{\H^\C},&V_{\omega_6}&=\overline{\mathbf{27}}\cong\overline{\H^\C},\\
 V_{\eta_1}&=\mathbf{52}\cong\mathfrak{f}_4^\C,&V_{\eta_2}&=\mathbf{1274},&V_{\eta_3}&=\mathbf{273},&V_{\eta_4}&=\mathbf{26}\cong\H_0^\C,
\end{align*}
where the number indicates the dimension.

As in the proof of Lemma \ref{uniquesubcrit}, we have the estimate
\[\Cas_\gamma^{E_6}\geq\sum_{r=1}^6a_r\Cas_{\omega_r}^{E_6}\]
for any representation $V_\gamma$ of $E_6$ with highest weight
\[\gamma=\sum_{r=1}^6a_r\omega_r.\]
Among the fundamental representations, only the Casimir eigenvalues
\[\Cas_{\omega_1}^{E_6}=\Cas_{\omega_6}^{E_6}=\frac{13}{18}\]
are smaller than $1$ (see table on p. 16 of \cite{SW}). Since $\frac{13}{18}+\frac{13}{18}>1$, it follows that only the representations to the highest weights $\C,\H^\C,\overline{\H^\C}$ come into question.

Consider now the $F_4$-representation $\H_0^\C\cong V_{\eta_4}$. We obtain\footnote{This has been verified through use of the software package LiE v2.1. See, for example, \url{http://young.sp2mi.univ-poitiers.fr/cgi-bin/form-prep/marc/sym-alt.act?x1=0&x2=0&x3=0&x4=1&power=2&kind=sym&rank=4&group=F4} or enter the command \texttt{sym\_tensor(2,[0,0,0,1],F4)} into the LiE shell.} the decomposition
\[\Sym^2V_{\eta_4}\cong V_{2\eta_4}\oplus V_{\eta_4}\oplus\C\]
into irreducible summands, hence
\[\Sym^2_0\H_0^\C\cong V_{2\eta_4}\oplus\H_0^\C.\]
Furthermore, we have
\[\H^\C\cong\overline{\H^\C}\cong\H_0^\C\oplus\C\]
as a representation of $F_4$. The assertion follows by comparison of summands.
\end{proof}

Lemma \ref{uniquesubcrit2} now tells us that the representations of $E_6$ with subcritical Casimir eigenvalue that occur in decomposition (\ref{eq:peterweyl}) of $\Sy^2_0(M)^\C$ are precisely $\H^\C$ and $\overline{\H^\C}$, both with multiplicity $1$, i.e.
\[\dim\Hom_{F_4}(\H^\C,\Sym^2_0\mathfrak{m}^\C)=\dim\Hom_{F_4}(\overline{\H^\C},\Sym^2_0\mathfrak{m}^\C)=1,\]
since $\mathfrak{m}\cong\H_0$. Again, we have to check whether the tensors in the corresponding subspace are divergence-free. It follows from the decomposition $\H=\H_0\oplus\R$ as a representation of $F_4$ that
\[\dim\Hom_{F_4}(\H^\C,\mathfrak{m}^\C)=\dim\Hom_{F_4}(\overline{\H^\C},\mathfrak{m}^\C)=1,\]
so as in the previous section, the summand has the same multiplicity in the left-regular representation $\Omega^1(M)$. Again, it follows from Corollary \ref{dimtt2} that
\[\dim\ker(\Delta_L-\lambda)\big|_{\TT(M)}=0\]
for $\lambda=\frac{13}{18}$, and since this is the only subcritical eigenvalue on $\Sy^2_0(M)$, we have shown the following, which, together with Prop.~\ref{sun}, finishes the proof of the main theorem.

\begin{prop}\label{e6f4}
The symmetric space $E_6/F_4$ is linearly stable.
\end{prop}

\clearpage
\appendix
\section{Appendix}
\subsection{Proofs of general statements}

\begin{proof}[Proof of Lemma \ref{dimtt}]
The following is a slightly generalized version of the proof of a result by N. Koiso \cite[Prop.~3.3]{Koi2}. We first note that
\[(\delta\alpha\cdot g,h)_g=\int_M\delta\alpha\langle g,h\rangle_g\vol_g=\frac{1}{2}(\delta\alpha,\tr_gh)_g=\frac{1}{2}(\alpha,d\tr_gh)_g\]
for $\alpha\in\Omega^1(M)$, $h\in\Sy^2(M)$, so the formal adjoint of $\theta$ is given by
\[\theta^\ast: \Sy^2_0(M)\to\Omega^1(M):\ h\mapsto\delta h+\frac{1}{n}d\tr_gh.\]

We show that $\theta$ is overdetermined elliptic. The principal symbol of $\theta$ is
\[\sigma_\xi(\theta)\alpha=\sigma_\xi(\delta^\ast)\alpha+\frac{2}{n}\sigma_\xi(\delta)\alpha\cdot g=\xi\odot\alpha-\frac{2}{n}\langle\xi,\alpha\rangle_gg\]
for $\xi,\alpha\in T^\ast_pM$. If $\xi\neq0$, then $\sigma_\xi(\theta)$ is injective: Suppose $\sigma_\xi(\theta)\alpha=0$. Then
\[\xi\odot\alpha=\frac{2}{n}\langle\xi,\alpha\rangle_gg.\]
Take an orthonormal basis $(e_i)$ with respect to $g$ of $T_pM$ and write
\[\xi=\sum_i\xi_ie_i^\flat,\ \alpha=\sum_i\alpha_ie_i^\flat.\]
For $i,j=1,\ldots,n$, it follows that
\[\xi_i\alpha_j+\xi_j\alpha_i=\frac{2}{n}\langle\xi,\alpha\rangle_g\delta_{ij}\]
and so $\xi_i\alpha_j=-\xi_j\alpha_i$ if $i\neq j$, as well as $\xi_i\alpha_i=\xi_j\alpha_j$ for any $i,j$. Then
\[\xi_i^2\alpha_j=-\xi_i\alpha_i\xi_j=-\xi_j^2\alpha_j.\]
If $\alpha_j\neq0$, this would imply that $\xi_i^2+\xi_j^2=0$ and so $\xi_i=\xi_j=0$, which contradicts the assumption that $\xi\neq0$. Overall, we conclude that $\alpha=0$ and thus the injectivity is proven.

From ellipticity, we obtain the orthogonal decomposition
\[\Sy^2_0(M)=\im\theta\oplus\ker\theta^\ast.\]
Let $h\in\ker(\Delta_L-\lambda)\big|_{\Sy^2_0(M)}$. According to the above decomposition, we can write $h$ as
\[h=\theta\alpha+\psi\]
where $\theta^\ast\psi=0$. Then also,
\[\delta\psi=\theta^\ast\psi-\frac{1}{n}d\tr_g\psi=0.\]
Since $(M,g)$ is Einstein, $\Delta_L$ commutes with $\delta$ on $\Sy^2(M)$ and with $\delta^\ast$ on $\Omega^1(M)$ \cite[10.7/10.8]{lichn}. Furthermore $\Delta_L(fg)=(\Delta f)g$ for any $f\in C^\infty(M)$. We conclude that $\Delta_L$ commutes with $\theta$ and $\theta^\ast$ as well. This implies that
\begin{align*}
\theta(\Delta-\lambda)\alpha&=(\Delta_L-\lambda)\theta\alpha=(\Delta_L-\lambda)(h-\psi)=-(\Delta_L-\lambda)\psi,\\
\theta^\ast(\Delta_L-\lambda)\psi&=(\Delta-\lambda)\theta^\ast\psi=0,
\end{align*}
and so
\[\theta^\ast\theta(\Delta-\lambda)\alpha=-\theta^\ast(\Delta_L-\lambda)\psi=0.\]
It follows that
\[\|\theta(\Delta-\lambda)\alpha\|^2_g=\left(\theta^\ast\theta(\Delta-\lambda)\alpha,(\Delta-\lambda)\alpha\right)_g=0\]
and so $\theta(\Delta-\lambda)\alpha=0=(\Delta_L-\lambda)\psi$. In total, $\psi\in\ker(\Delta_L-\lambda)\big|_{\TT(M)}$.

Also, if $h$ is an element of $\ker(\Delta_L-\lambda)\big|_{\TT(M)}$, then
\[\theta^\ast h=\delta h+\frac{1}{n}d\tr_gh=0\]
and so $\psi=h$. This means that the mapping
\[P: \ker(\Delta_L-\lambda)\big|_{\Sy^2_0(M)}\to\ker(\Delta_L-\lambda)\big|_{\TT(M)}:\ h\mapsto\psi\]
defines a projection, and the dimension formula
\[\dim\ker(\Delta_L-\lambda)\big|_{\TT(M)}=\dim\left(\ker(\Delta_L-\lambda)\big|_{\Sy^2_0(M)}\right)-\dim\ker P\]
holds.

By definition, the kernel of $P$ consists of those $h\in\ker(\Delta_L-\lambda)\big|_{\Sy^2_0(M)}$ with $h=\theta\alpha$ for some $\alpha\in\Omega^1(M)$, i.e. $h\in\im\theta$. Hence we know that
\[\ker P=\ker(\Delta_L-\lambda)\big|_{\Sy^2_0(M)}\cap\im\theta.\]
Let $\alpha\in\ker(\Delta-\lambda)\big|_{\Omega^1(M)}$. We have seen that $\Delta_L$ commutes with $\theta$, so it follows that $\theta\alpha\in\ker(\Delta_L-\lambda)\big|_{\Sy^2_0(M)}$ and therefore
\[\theta\left(\ker(\Delta-\lambda)\big|_{\Omega^1(M)}\right)\subset\ker P.\]
Conversely, let $h\in\ker P$. Then there exists some $\alpha\in\Omega^1(M)$ such that $h=\theta\alpha$, and also $h\in\ker(\Delta_L^g-\lambda)\big|_{\Sy^2_0(M)}$. By the ellipticity of the operator $\Delta-\lambda$, we can decompose $\alpha$ into
\[\alpha=\beta+(\Delta-\lambda)\gamma\]
with $\beta\in\ker(\Delta-\lambda)\big|_{\Omega^1(M)}$, $\gamma\in\Omega^1(M)$. Then
\begin{align*}
0&=(\Delta_L-\lambda)\theta\alpha\\
&=(\Delta_L-\lambda)\theta\beta+(\Delta_L-\lambda)\theta(\Delta-\lambda)\gamma\\
&=\theta(\Delta-\lambda)\beta+(\Delta_L-\lambda)^2\theta\gamma\\
&=(\Delta_L-\lambda)^2\theta\gamma.
\end{align*}
Since $\Delta_L$ is self-adjoint, we have
\[\|(\Delta_L-\lambda)\theta\gamma\|^2_g=\left((\Delta_L-\lambda)^2\theta\gamma,\theta\gamma\right)_g=0\]
and thus
\[\theta(\Delta-\lambda)\gamma=(\Delta_L-\lambda)\theta\gamma=0,\]
i.e. $(\Delta-\lambda)\gamma\in\ker\theta$. This implies that $h=\theta\alpha=\theta\beta$, so
\[\theta: \ker(\Delta-\lambda)\big|_{\Omega^1(M)}\to\ker P\]
is surjective and we obtain the dimension formula
\[\dim\ker P=\dim\ker(\Delta-\lambda)\big|_{\Omega^1(M)}-\dim\left(\ker(\Delta-\lambda)\big|_{\Omega^1(M)}\cap\ker\theta\right).\]
\end{proof}

\begin{proof}[Proof of Lemma \ref{confkilling}]
Let $\Einstein$ be the Einstein constant of $(M,g)$. Let $\alpha\in\Omega^1(M)$ such that
\[\theta\alpha=\delta^\ast\alpha+\frac{2}{n}\delta\alpha\cdot g=0.\]
Taking the divergence yields
\[\delta\theta\alpha=\delta\delta^\ast\alpha-\frac{2}{n}d\delta\alpha=0,\]
since $\delta(fg)=-df$ for $f\in C^\infty(M)$. We make use of the well-known Weitzenböck identities
\begin{alignat*}{2}
 \delta\delta^\ast-\delta^\ast\delta&=\nabla^\ast\nabla-q(R)&\quad&\text{on }\Sy^k(M),\\
 \Delta=d^\ast d+dd^\ast&=\nabla^\ast\nabla+q(R)&\quad&\text{on }\Omega^k(M).
\end{alignat*}
For $k=1$ and since $\delta^\ast=d=\nabla$ on functions and $(M,g)$ is Einstein, these amount to
\begin{align*}
 \delta\delta^\ast\alpha-d\delta\alpha=\nabla^\ast\nabla\alpha-\Einstein\alpha,\\
 d^\ast d\alpha+d\delta\alpha=\nabla^\ast\nabla\alpha+\Einstein\alpha.
\end{align*}
Putting these together, we obtain
\begin{align*}
\delta\theta\alpha=\left(1-\frac{2}{n}\right)d\delta\alpha+\nabla^\ast\nabla\alpha-\Einstein\alpha=\left(2-\frac{2}{n}\right)d\delta\alpha+d^\ast d\alpha-2\Einstein\alpha=0.
\end{align*}
Taking the $L^2$ inner product with $\alpha$ then yields
\[\left(2-\frac{2}{n}\right)\|\delta\alpha\|^2_g+\|d\alpha\|^2_g-2\Einstein\|\alpha\|_g=0.\]
If $\Einstein<0$, this directly implies that $\alpha=0$. If $\Einstein=0$, it implies $\delta\alpha=0$ and $d\alpha=0$, and since $\theta\alpha=0$, it follows that $\delta^\ast\alpha=0$. If $\Einstein>0$, then applying the codifferential to $\delta\theta\alpha$ yields
\[\left(2-\frac{2}{n}\right)d^\ast d\delta\alpha+(d^\ast)^2d\alpha-2\Einstein d^\ast\alpha=\left(2-\frac{2}{n}\right)\Delta\delta\alpha-2\Einstein\delta\alpha=0,\]
so $\delta\alpha$ would be an eigenfunction of the Laplacian to the eigenvalue $\frac{\Einstein n}{n-1}=\frac{\scal_g}{n-1}$. By a theorem of Obata \cite[Thm.~D.I.6]{obata}, this eigenvalue can only be attained on the standard sphere, so necessarily $\delta\alpha=0$. It follows again from $\theta\alpha=0$ that $\delta^\ast\alpha=0$.
\end{proof}

\begin{proof}[Proof of Lemma \ref{killingcrit}]
Let $\alpha\in\Omega^1(M)$ such that $\delta^\ast\alpha=0$. Then also $\delta\alpha=0$, since $\delta\alpha=-\tr_g\delta^\ast\alpha=0$. By virtue of the Weitzenböck formulae that were already employed in the proof of Lemma \ref{confkilling}, we conclude that
\[\Delta\alpha=\nabla^\ast\nabla\alpha+\Einstein\alpha=\delta\delta^\ast\alpha-d\delta\alpha+2\Einstein\alpha=2\Einstein\alpha.\]
\end{proof}

\subsection{Alternative proof of the stability of $\SU(n)$}


An alternative method of checking that the prototypical differential operators
\begin{align*}
\delta&: \Hom_K(E\otimes E^\ast,\Sym^2_0\mathfrak{m}^\C)\to\Hom_K(E\otimes E^\ast,\mathfrak{m}^\C),\\
\delta&: \Hom_K(E^\ast\otimes E,\Sym^2_0\mathfrak{m}^\C)\to\Hom_K(E^\ast\otimes E,\mathfrak{m}^\C)
\end{align*}
are injective is an explicit computation by means of Lemma \ref{divformula}. To do so, we first pick out an explicit element
\[A\in\Hom_K(E\otimes E^\ast,\Sym^2_0\mathfrak{m}^\C)\]
and then proceed to compute the divergence on the corresponding subspace of $\Sy^2_0(M)$.

\begin{lem}\label{sunpi}
Let $\pi: \Sym^2(E\otimes E^\ast)\to E\otimes E^\ast$ denote the mapping defined by
\[\pi(A\odot B):=AB^\ast+BA^\ast,\]
where $A,B\in E\otimes E^\ast$ are regarded as complex $n\times n$-matrices. Then
\[\pi\in\Hom_K(\Sym^2(E\otimes E^\ast),E\otimes E^\ast).\]
Moreover, the restriction
\[\pi: \Sym^2_0(E\otimes_0E^\ast)\to E\otimes_0E^\ast\]
is surjective, and $W:=\left(\ker\pi\big|_{\Sym^2_0(E\otimes_0E^\ast)}\right)^\bot\cong E\otimes_0E^\ast$.
\end{lem}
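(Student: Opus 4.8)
The plan is to treat elements of $E\otimes E^\ast$ as $n\times n$ matrices on which $k\in K=\SU(n)$ acts by conjugation $A\mapsto kAk^{-1}=kAk^\ast$. First I would record that $\pi$ is well defined: the expression $AB^\ast+BA^\ast$ is manifestly symmetric under exchanging $A$ and $B$, so it descends to the symmetric square. For the equivariance claim I would simply substitute, replacing $A,B$ by $kAk^\ast,kBk^\ast$ and using $(kBk^\ast)^\ast=kB^\ast k^\ast$ together with $k^\ast k=\Id$:
\[\pi\bigl((kAk^\ast)\odot(kBk^\ast)\bigr)=kAk^\ast kB^\ast k^\ast+kBk^\ast kA^\ast k^\ast=k(AB^\ast+BA^\ast)k^\ast,\]
which is exactly $k\cdot\pi(A\odot B)$. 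Hence $\pi\in\Hom_K(\Sym^2(E\otimes E^\ast),E\otimes E^\ast)$.

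Next I would check that the restriction really lands in the traceless part. Computing $\tr\pi(A\odot B)=\tr(AB^\ast)+\tr(BA^\ast)=2\Re\tr(AB^\ast)$, one recognizes this as a multiple of the metric contraction of $A\odot B$; consequently $\pi$ carries $\ker\tr_g=\Sym^2_0(E\otimes_0E^\ast)$ into $\ker\tr=E\otimes_0E^\ast$, so the restriction in the statement makes sense. For surjectivity I would use that $E\otimes_0E^\ast\cong\mathfrak{k}^\C$ is an \emph{irreducible} $K$-representation and that, by Lemma \ref{uniquesubcrit}, it occurs in $\Sym^2_0(E\otimes_0E^\ast)$ with multiplicity exactly one; hence the space of equivariant maps $\Sym^2_0(E\otimes_0E^\ast)\to E\otimes_0E^\ast$ is one-dimensional and $\pi$ restricted to this summand is either zero or onto. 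To rule out the former it suffices to exhibit a single element with nonzero image: choosing traceless $A,B$ of equal Hilbert--Schmidt norm, the element $A\odot A-B\odot B$ is trace-free and maps to $2(AA^\ast-BB^\ast)$, which is a nonzero traceless matrix for suitable $A,B$. By irreducibility the image is then all of $E\otimes_0E^\ast$.

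Finally, the identification of $W$ is formal once surjectivity is known. The restriction $\pi|_{\Sym^2_0(E\otimes_0E^\ast)}$ is an equivariant surjection onto the irreducible module $E\otimes_0E^\ast$, so its kernel is an invariant subspace of codimension $n^2-1$ and $W=(\ker\pi)^\bot$ is the invariant complement; thus $\pi$ restricts to an equivariant isomorphism $W\xrightarrow{\sim}E\otimes_0E^\ast$, giving $W\cong E\otimes_0E^\ast$, and by the multiplicity-one statement $W$ is the unique copy of $\mathfrak{k}^\C$ inside $\Sym^2_0(E\otimes_0E^\ast)$. The only real obstacle is the \emph{nonvanishing} of $\pi$ on this summand: because $E\otimes_0E^\ast$ appears just once, $\pi$ could a priori annihilate it, so the entire content of ``surjective'' reduces to producing one element with nonzero image. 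Some care is also needed with the conjugate-linearity built into the Hermitian adjoint when interpreting $\pi$ as a morphism of the relevant (complexified) representations.
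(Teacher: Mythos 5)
Your proposal is correct and follows essentially the same route as the paper: equivariance by direct substitution using unitarity, the trace computation to land in the traceless part, a nonvanishing witness in $\Sym^2_0(E\otimes_0E^\ast)$, irreducibility of $E\otimes_0E^\ast$ for surjectivity, and the multiplicity-one fact from Lemma \ref{uniquesubcrit} to identify $W$. The only detail you leave implicit is the choice of ``suitable $A,B$'' in your witness; for instance $A=E_{12}$, $B=E_{21}$ gives $\pi(A\odot A-B\odot B)=2(E_{11}-E_{22})\neq0$, which plays exactly the role of the paper's explicit element $\pi(E_{21}\odot E_{31})=E_{23}+E_{32}\neq0$.
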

\begin{proof}
The equivariance of $\pi$ under the action of $K$ follows from
\[\pi(kAk^{-1}\odot kBk^{-1})=kAk^{-1}(k^{-1})^\ast B^\ast k^\ast+k^{-1}Bk(k^{-1})^\ast A^\ast k^\ast=k(AB^\ast+BA^\ast)k^{-1}\]
for any $k\in K=\SU(n)$ and $A,B\in E\otimes E^\ast$. Furthermore, we have
\[\tr(\pi(A\odot B))=\tr(AB^\ast+BA^\ast)=\langle A,B\rangle+\langle B,A\rangle=\tr(A\odot B),\]
where the last trace is taken with respect to the inner product on $E\otimes E^\ast$. This means that
\[\pi(\Sym^2_0(E\otimes E^\ast))\subset E\otimes_0E^\ast.\]
Next we want to show that $\pi$ does not vanish when restricted to $\Sym^2_0(E\otimes_0E^\ast)$. If we denote by $E_{ij}$ the $n\times n$-matrix that has entry $1$ at position $(i,j)$ and $0$ elsewhere, then we have for example $E_{21},E_{31}\in E\otimes_0E^\ast$ and $\langle E_{21},E_{31}\rangle=0$, so $E_{21}\odot E_{31}\in\Sym^2_0(E\otimes_0E^\ast)$ and
\[\pi(E_{21}\odot E_{31})=E_{21}E_{13}+E_{31}E_{12}=E_{23}+E_{32}\neq0.\]
Now, since $E\otimes_0E^\ast$ is irreducible, the mapping
\[\pi: \Sym^2_0(E\otimes_0E^\ast)\to E\otimes_0E^\ast\]
must be surjective. We have seen in the proof of Lemma \ref{uniquesubcrit} that $E\otimes_0E^\ast$ appears in the decomposition of $\Sym^2_0(E\otimes_0E^\ast)$ with multiplicity $1$; hence $W:=\left(\ker\pi\big|_{\Sym^2_0(E\otimes_0E^\ast)}\right)^\bot$ must be the irreducible summand of $\Sym^2_0(E\otimes_0E^\ast)$ that is equivalent to $E\otimes_0E^\ast$.
\end{proof}

\begin{proof}[Alternative proof of Prop. \ref{sun}]
The properties of $\pi$ from Lemma \ref{sunpi} allow us to define
\[\tilde{A}:=\pi\big|_W^{-1}\in\Hom_K(E\otimes_0E^\ast,\Sym^2_0(E\otimes_0E^\ast))\]
and extend it with zero to a mapping $\tilde{A}\in\Hom_K(E\otimes E^\ast,\Sym^2_0(E\otimes_0E^\ast))$. Via the identification $\mathfrak{m}^\C\cong E\otimes_0E^\ast$, this gives rise to a mapping
\[A\in\Hom_K(E\otimes E^\ast,\Sym^2_0\mathfrak{m}^\C).\]
From the equivariance of $\pi\big|_W$, the irreducibility of $W\cong E\otimes_0E^\ast$ and Schur's Lemma it follows that $\pi\big|_W$ is unitary up to a positive constant, that is
\[\langle\pi(v),\pi(w)\rangle_{E\otimes_0E^\ast}=c\cdot\langle v,w\rangle_{\Sym^2_0(E\otimes_0E^\ast)}\]
for all $v,w\in W$ and some $c>0$. Denote the tensor product representation of $G$ on $E\otimes E^\ast$ by
\[\rho: G\to\Aut(E\otimes E^\ast):\ \rho(k_1,k_2)F=k_1Fk_2^{-1}\]
for $F\in E\otimes E^\ast$. Its differential is given by
\[d\rho: \mathfrak{g}\to\End(E\otimes E^\ast):\ d\rho(X_1,X_2)F=X_1F-FX_2\]
for $X_1,X_2\in\mathfrak{k}$. In particular,
\[d\rho(X,-X)F=XF+FX.\]
Let $(e_i)$ be an orthonormal basis of $\mathfrak{m}$, $e_i=(f_i,-f_i)$ with $f_i\in\mathfrak{k}$. Under the identification $\mathfrak{m}^\C\cong E\otimes_0E^\ast$, the invariant inner product changes by some positive constant factor, and $e_i$ is mapped to $f_i$. Hence, $(f_i)$ is an orthonormal basis of $\mathfrak{k}\subset E\otimes_0E^\ast$ up to a positive factor.

Now, let $X\in\mathfrak{k}$ and $F\in E\otimes E^\ast$. Using the formula from Lemma \ref{divformula}, it follows that
\begin{align*}
(\delta h)_{o}(X,-X)&=\sum_i\langle A(d\rho(e_i)F),e_i\odot(X,-X)\rangle_{\Sym^2_0\mathfrak{m}^\C}\\
&=c\cdot\sum_i\langle\tilde{A}(f_iF+Ff_i),f_i\odot X\rangle_{\Sym^2_0(E\otimes_0E^\ast)}\\
&=c\cdot\sum_i\langle\tilde{A}(f_iF+Ff_i),\pr_W(f_i\odot X)\rangle_{\Sym^2_0(E\otimes_0E^\ast)}\\
&=c'\cdot\sum_i\langle f_iF+Ff_i,\pi(\pr_{\Sym^2_0(E\otimes_0E^\ast)}(f_i\odot X))\rangle_{E\otimes_0E^\ast}
\end{align*}
for some $c,c'>0$. Since the trivial summand of $\Sym^2(E\otimes_0E^\ast)$ can only be mapped to the trivial summand of $E\otimes E^\ast$ under the equivariant map $\pi$, we have
\[\pi\circ\pr_{\Sym^2_0(E\otimes_0E^\ast)}=\pr_{E\otimes_0E^\ast}\circ\,\pi\]
on $\Sym^2(E\otimes_0E^\ast)$, implying that
\begin{align*}
(\delta h)_{o}(X,-X)&=c'\cdot\sum_i\langle f_iF+Ff_i,\pr_{E\otimes_0E^\ast}(f_iX^\ast+Xf_i^\ast)\rangle\\
&=-c'\cdot\sum_i\langle f_iF+Ff_i,\pr_{E\otimes_0E^\ast}(f_iX+Xf_i)\rangle.
\end{align*}
Choose the (up to a positive factor) orthonormal basis $(f_i)$ of $\mathfrak{k}$ in such a way that $f_1=E_{12}-E_{21}$. Furthermore, let $X=F=E_{13}-E_{31}$. Then,
\[f_1F+Ff_1=(E_{12}-E_{21})(E_{13}-E_{31})+(E_{13}-E_{31})(E_{12}-E_{21})=-E_{23}-E_{32}\in E\otimes_0E^\ast\]
and we obtain
\begin{align*}
&\sum_i\langle f_iF+Ff_i,\pr_{E\otimes_0E^\ast}(f_iX+Xf_i)\rangle=\sum_i\langle f_iF+Ff_i,\pr_{E\otimes_0E^\ast}(f_iF+Ff_i)\rangle\\
\geq&\langle f_1F+Ff_1,\pr_{E\otimes_0E^\ast}(f_1F+Ff_1)\rangle=\langle E_{23}+E_{32},E_{23}+E_{32}\rangle=2>0.
\end{align*}
In particular, we have found $Y\in\mathfrak{m}$ such that $(\delta h)_{o}(Y)\neq0$, where $h\in\Sy^2_0(M)$ is associated to
\[F\otimes A\in(E\otimes E^\ast)\otimes\Hom_K(E\otimes E^\ast,\Sym^2_0\mathfrak{m}^\C).\]
This means that the linear mapping
\[\delta: \Hom_K(E\otimes E^\ast,\Sym^2_0\mathfrak{m}^\C)\to\Hom_K(E\otimes E^\ast,\mathfrak{m}^\C)\]
is nonzero. Hence, there are no tt-eigentensors for the subcritical Casimir eigenvalue. This proves the assertion.
\end{proof}

\subsection{Alternative proof of the stability of $E_6/F_4$}

As we did before in the situation of $\SU(n)$, we want to apply Lemma \ref{divformula} to verify that the mappings
\begin{align*}
\delta&: \Hom_{F_4}(\H^\C,\Sym^2_0\mathfrak{m}^\C)\to\Hom_{F_4}(\H^\C,\mathfrak{m}^\C),\\
\delta&: \Hom_{F_4}(\overline{\H^\C},\Sym^2_0\mathfrak{m}^\C)\to\Hom_{F_4}(\overline{\H^\C},\mathfrak{m}^\C)
\end{align*}
are injective. Surprisingly, the computation works very similar to the $\SU(n)$ case.

\begin{lem}\label{efpi}
Let $\pi: \Sym^2\H\to\H$ denote the mapping defined by
\[\pi(A\odot B):=AB+BA=2A\circ B.\]
Then we have
\[\pi\in\Hom_{F_4}(\Sym^2\H_0,\H).\]
The restriction
\[\pi: \Sym^2_0\H_0\to\H_0\]
is surjective, and $W:=\left(\ker\pi\big|_{\Sym^2_0\H_0}\right)^\bot\cong\H_0$.
\end{lem}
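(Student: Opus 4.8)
The plan is to follow the proof of Lemma \ref{sunpi} almost verbatim, substituting the Jordan product for matrix multiplication and $F_4$ for $\SU(n)$. The first step is the $F_4$-equivariance of $\pi$, which here is essentially immediate: since $F_4=\Aut(\H,\circ)$, any $\phi\in F_4$ satisfies $\phi(A\circ B)=\phi(A)\circ\phi(B)$, and because $\pi(A\odot B)=2\,A\circ B$ this gives $\phi(\pi(A\odot B))=\pi(\phi(A)\odot\phi(B))$ directly. As $A\circ B$ is Hermitian whenever $A,B$ are, $\pi$ does land in $\H$, so $\pi\in\Hom_{F_4}(\Sym^2\H,\H)$; restricting the source to $\Sym^2\H_0$ yields $\pi\in\Hom_{F_4}(\Sym^2\H_0,\H)$.

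Next I would verify that the restriction actually takes values in the trace-free part $\H_0$. The key identity is $\tr(\pi(A\odot B))=\tr(A\odot B)$, where on the left $\tr$ is the ordinary matrix trace on $\H$ and on the right it is the trace induced by the inner product $\langle A,B\rangle=\tr(A\circ B)$ on $\H_0$, so that the $\Sym^2$-trace of $A\odot B$ equals $\langle A,B\rangle+\langle B,A\rangle=2\langle A,B\rangle$. Since $\pi(A\odot B)=2\,A\circ B$ has matrix trace $2\tr(A\circ B)=2\langle A,B\rangle$, the two traces agree, and therefore $\pi(\Sym^2_0\H_0)\subset\H_0$.

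Then I would exhibit a single element on which $\pi$ does not vanish. Using the basis matrices $F_1,F_2,F_3$, a direct computation of the octonionic matrix products yields a multiplication rule of the shape $F_1(x)\circ F_2(y)=\tfrac{1}{2}F_3(\overline{y}\,\overline{x})$; in particular $\langle F_1(x),F_2(y)\rangle=\tr(F_1(x)\circ F_2(y))=0$, so $F_1(x)\odot F_2(y)\in\Sym^2_0\H_0$, while $\pi(F_1(1)\odot F_2(1))=F_3(1)\neq0$. Because $\H_0$ is irreducible as an $F_4$-representation (its complexification is $V_{\eta_4}$), the image of $\pi|_{\Sym^2_0\H_0}$ is a nonzero invariant subspace, hence all of $\H_0$, so $\pi$ is surjective. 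Finally, by Lemma \ref{uniquesubcrit2} the summand $\H_0$ occurs in $\Sym^2_0\H_0$ with multiplicity one; thus $W=(\ker\pi|_{\Sym^2_0\H_0})^\bot$, being mapped isomorphically onto $\H_0$ by $\pi$, must be exactly that summand, giving $W\cong\H_0$.

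The only genuinely new work relative to the $\SU(n)$ case, and the step I expect to demand the most care, is the explicit octonionic matrix computation underlying the non-vanishing claim. Because $\O$ is non-associative one cannot rearrange products of the $3\times3$ octonion matrices freely, and the conjugations in the off-diagonal entries must be tracked precisely both to obtain the rule $F_1\circ F_2\sim F_3$ and to confirm that $\pi$ sends Hermitian matrices to Hermitian matrices. By contrast, the equivariance and the trace identity are formal once one uses that $F_4$ preserves the Jordan product and that $\langle\cdot,\cdot\rangle$ is built from it.
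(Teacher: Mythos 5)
Your proposal is correct and follows essentially the same route as the paper's proof: equivariance via $F_4=\Aut(\H,\circ)$, the trace identity $\tr(\pi(A\odot B))=2\langle A,B\rangle=\tr(A\odot B)$, non-vanishing on $F_1(1)\odot F_2(1)$ with image $F_3(1)$, surjectivity by irreducibility of $\H_0$, and identification of $W$ via the multiplicity-one statement from Lemma \ref{uniquesubcrit2}. Your explicit verification of the rule $F_1(x)\circ F_2(y)=\tfrac{1}{2}F_3(\bar{y}\bar{x})$ is a correct (and slightly more detailed) version of the single evaluation $2F_1(1)\circ F_2(1)=F_3(1)$ that the paper uses.
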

\begin{proof}
The proof is completely analogous to the proof of Lemma \ref{sunpi}. First, we note that $\pi$ is well-defined since $(\H,\circ)$ is a commutative algebra. The equivariance of $\pi$ under the action of $F_4$ follows from
\[\pi(f(A)\odot f(B))=2f(A)\circ f(B)=f(2A\circ B)=f(\pi(A\odot B))\]
for any $f\in F_4=\Aut(\H,\circ)$ and $A,B\in\H$. Furthermore, we have
\[\tr(\pi(A\odot B))=2\tr(A\circ B)=2\langle A,B\rangle=\tr(A\odot B),\]
where the last trace is taken with respect to the inner product on $\H$. This means that
\[\pi(\Sym^2_0\H)\subset\H_0.\]
Now we want to show that $\pi$ does not vanish when restricted to $\Sym^2_0\H_0$. For example, take $F_1(1),F_2(1)\in\H_0$. We have $\langle F_1(1),F_2(1)\rangle=0$ and thus $F_1(1)\odot F_2(1)\in\Sym^2_0\H_0$. Also,
\[\pi(F_1(1)\odot F_2(1))=2F_1(1)\circ F_2(1)=F_3(1)\neq0.\]
Since $\H_0$ is irreducible over $F_4$, the mapping
\[\pi: \Sym^2_0\H_0\to\H_0\]
must be surjective. From the proof of Lemma \ref{uniquesubcrit2}, we know that $\H_0$ appears in the decomposition of $\Sym^2_0\H_0$ with multiplicity $1$; hence $W:=\left(\ker\pi\big|_{\Sym^2_0\H_0}\right)^\bot$ must be the irreducible summand of $\Sym^2_0\H_0$ that is equivalent to $\H_0$.
\end{proof}

\begin{proof}[Alternative proof of Prop. \ref{e6f4}]
By Lemma \ref{efpi}, we can define
\[A:=\pi\big|_W^{-1}\in\Hom_{F_4}(\H_0,\Sym^2_0\H_0),\]
extend it with zero to $\H$ and then complex-linearly to a mapping $A\in\Hom_{F_4}(\H^\C,\Sym^2_0\H_0^\C)$. Again, we need that $\pi\big|_W$ is unitary up to a positive constant, which follows by Schur's Lemma from the equivariance of $\pi\big|_W$ and the irreducibility of $W\cong\H_0$. By Theorem 3.2.4 in \cite{Yo}, every element  $\alpha\in\mathfrak{e}_6\subset\End_\C(\H^\C)$ can be written as
\[\alpha=\beta+\i T\circ\]
with unique elements $\beta\in\mathfrak{f}_4\subset\mathfrak{e}_6$ and $T\in\H_0$. This corresponds to the $F_4$-invariant decomposition
\[\mathfrak{e}_6\cong\mathfrak{f}_4\oplus\H_0.\]
Throughout what follows, we identify $\mathfrak{m}\cong\H_0$. If we denote the defining representation by
\[\rho: E_6\to\Aut\H^\C,\]
then in particular,
\[d\rho(X)=\i X\circ\]
for $X\in\mathfrak{m}$. Let $(e_i)$ be an orthonormal basis of $\H_0$ (again, under the identification $\mathfrak{m}\cong\H_0$, the invariant inner product changes at most by some positive constant factor), $X\in\mathfrak{m}$ and $F\in\H^\C$. Using Lemma \ref{divformula}, we thus obtain
\begin{align*}
&(\delta h)_{o}(X)=c\cdot\sum_i\langle A(d\rho(e_i)F),e_i\odot X\rangle_{\Sym^2_0\H_0^\C}=c\cdot\sum_i\langle A(\i e_i\circ F),e_i\odot X\rangle_{\Sym^2_0\H_0^\C}\\
=\,&c\cdot\sum_i\langle A(\i e_i\circ F),\pr_W(e_i\odot X)\rangle_{\Sym^2_0\H_0^\C}=c'\cdot\sum_i\langle\i e_i\circ F,\pi(\pr_{\Sym^2_0\H_0}(e_i\odot X))\rangle_{\H_0^\C}
\end{align*}
for some $c,c'>0$. The trivial summand of $\Sym^2\H_0$ can only be mapped to the trivial summand of $\H$ under the equivariant map $\pi$, implying that
\[\pi\circ\pr_{\Sym^2_0\H_0}=\pr_{\H_0}\circ\pi\]
on $\Sym^2\H_0$. Thus, we have
\[(\delta h)_{o}(X)=\i c'\cdot\sum_i\langle e_i\circ F,\pr_{\H_0}(\pi(e_i\odot X))\rangle=2\i c'\sum_i\langle e_i\circ F,\pr_{\H_0}(e_i\circ X)\rangle.\]

Now let $X=F=F_1(1)$. Choose the (up to a positive factor) orthonormal basis $(e_i)$ of $\H_0$ in such a way that $e_1=F_2(1)$. Then we have
\[e_1\circ F=F_2(1)\circ F_1(1)=\frac{1}{2}F_3(1)\in\H_0\]
and it follows that
\begin{align*}
\sum_i\langle e_i\circ F,\pr_{\H_0}(e_i\circ X)\rangle&=\sum_i\langle e_i\circ F,\pr_{\H_0}(e_i\circ F)\rangle\geq\langle e_1\circ F,\pr_{\H_0}(e_1\circ F)\rangle\\
&=\frac{1}{4}\left\langle F_3(1),F_3(1)\right\rangle=\frac{1}{2}>0.
\end{align*}
In particular, we have found $Y\in\mathfrak{m}$ such that $(\delta h)_{o}(Y)\neq0$, where $h\in\Sy^2_0(M)$ is associated to
\[F\otimes A\in\H^\C\otimes\Hom_{F_4}(\H^\C,\Sym^2_0\mathfrak{m}^\C).\]
This means that the linear mapping
\[\delta: \Hom_{F_4}(\H^\C,\Sym^2_0\mathfrak{m}^\C)\to\Hom_{F_4}(\H^\C,\mathfrak{m}^\C)\]
is nonzero. The same argument works for the $E_6$-representation $\overline{\H^\C}$, since we exclusively used real elements and automorphisms in the computation. In total, there are no tt-eigentensors for the subcritical Casimir eigenvalue, which proves the assertion.
\end{proof}

\clearpage


\begin{thebibliography}{uf}
\bibitem{obata} Marcel Berger, Paul Gauduchon, Edmond Mazet: \emph{Le spectre d'une variété Riemannienne}, Lecture Notes in Mathematics -- Vol. 194, Springer, 1971.
\bibitem{Be} Arthur L. Besse: \emph{Einstein manifolds}, Classics in Mathematics, Springer, 1987.
\bibitem{CH} Huai-Dong Cao, Chenxu He: \emph{Linear stability of Perelman's $\nu$-entropy on symmetric spaces of compact type}, J. Reine Angew. Math. Vol. 709, pp. 229--246, 2015.
\bibitem{CHI} Huai-Dong Cao, Richard S. Hamilton, T. Ilmanen, \emph{Gaussian densities and stability for some Ricci solitons}, \href{https://arxiv.org/abs/math/0404165}{arXiv:math/0404165}, 2004.
\bibitem{Di} Peter-Simon Dieterich: \emph{On the Lichnerowicz Laplace operator and its application to stability of spacetimes} (thesis), Universität Stuttgart, 2012.
\bibitem{FH} William Fulton, Joe Harris: \emph{Representation Theory: A First Course}, Graduate Texts in Mathematics Vol. 129, Springer--Verlag New York, 1991.
\bibitem{GG} Jacques Gasqui, Hubert Goldschmidt: \emph{Radon transforms and spectral rigidity on the complex quadrics
and the real Grassmannians of rank two}, J. Reine Angew. Math. Vol. 480, pp. 1--69, 1996.
\bibitem{GHP} Gary W. Gibbons, Sean A. Hartnoll, Christopher N. Pope: \emph{Bohm and Einstein-Sasaki Metrics, Black Holes,
and Cosmological Event Horizons}, Physical Review D Vol. 67, No. 8, 2003.
\bibitem{lichn}André Lichnerowicz: \emph{Propagateurs et commutateurs en relativité générale}, Publications mathématiques de l’I.H.É.S. --  Vol. 10, p. 5--56, 1961.
\bibitem{Kois} Norihito Koiso: \emph{Rigidity and stability of Einstein metrics -- The case of compact symmetric spaces}, Osaka Math. J. Vol. 17, pp. 51--73, 1980.
\bibitem{Koi2} Norihito Koiso: \emph{Rigidity and infinitesimal deformability of Einstein metrics}, Osaka Math. J. Vol. 19, pp. 643--668, 1982.
\bibitem{Kr} Klaus Kröncke: \emph{Stability of Einstein Manifolds} (thesis), University of Potsdam, 2013.
\bibitem{SW1} Uwe Semmelmann, Gregor Weingart: \emph{The Standard Laplacian}, manuscripta mathematica Vol. 158, pp. 273–-293, 2019.
\bibitem{SW} Uwe Semmelmann, Gregor Weingart: \emph{Linear Stability of Symmetric Spaces}, \href{https://arxiv.org/abs/2012.07328}{arXiv:2012.07328}, 2020.
\bibitem{Wa} Nolan R. Wallach: \emph{Harmonic Analysis on Homogeneous Spaces}, Marcel Dekker, Inc., 1973.
\bibitem{Yo} Ichiro Yokota: \emph{Exceptional Lie groups}, \href{https://arxiv.org/abs/0902.0431}{arXiv:0902.0431}, 2009.
\end{thebibliography}
\end{document}